\begin{document}

\newcommand{\wk}{\mbox{$\,<$\hspace{-5pt}\footnotesize )$\,$}}

\numberwithin{equation}{section}
\newtheorem{teo}{Theorem}
\newtheorem{lemma}{Lemma}

\newtheorem{coro}{Corollary}
\newtheorem{prop}{Proposition}
\theoremstyle{definition}
\newtheorem{definition}{Definition}

\theoremstyle{remark}

\newtheorem{remark}{Remark}
\newtheorem{scho}{Scholium}
\newtheorem{open}{Question}

\numberwithin{lemma}{subsection}
\numberwithin{prop}{subsection}
\numberwithin{teo}{subsection}
\numberwithin{definition}{subsection}
\numberwithin{coro}{subsection}
\numberwithin{figure}{subsection}
\numberwithin{remark}{subsection}
\numberwithin{scho}{subsection}

\bibliographystyle{spmpsci}

\title{On a cosine function defined for smooth normed spaces}

\author[V. Balestro]{Vitor Balestro}
\address[V. Balestro]{CEFET/RJ Campus Nova Friburgo
\newline
\indent 28635-000 Nova Friburgo
\newline
\indent Brazil
\newline
\indent\&
\newline
\indent Instituto de Matem\'{a}tica e Estat\'{i}stica
\newline
\indent Universidade Federal Fluminense
\newline
\indent24020-140 Niter\'{o}i
 \newline
\indent Brazil}
\email{vitorbalestro@id.uff.br}
\author[E. Shonoda]{Emad Shonoda}
\address[E. Shonoda]{Department of Mathematics \& Computer Science
\newline
\indent Faculty of Science
\newline
\indent Port Said University
\newline
\indent 42521 Port Said
\newline
\indent Egypt}
\email{en$\_$shonoda@yahoo.de}

\begin{abstract} We continue research on a certain cosine function defined for smooth Minkowski spaces. We prove that such function is symmetric if and only if the corresponding space is Euclidean, and also that it can be given in terms of the Gateaux derivative of the norm. As an application we study the ratio between the lengths of tangent segments drawn from an external point to the unit circle of a Radon plane. We also give a characterization of such planes in terms of signs of the cosine function.
\end{abstract}

\subjclass{Primary 46B20; Secondary 33B10, 52A10, 52A21}
\keywords{Gateaux derivative, Minkowski cosine function, Minkowski geometry, Radon curves, semi-inner product, smooth norm}

\maketitle

\section{Introduction}

Trigonometric functions in Minkowski geometry were already investigated by Finsler \cite{Fi2}, Barthel \cite{Bar2}, Busemann \cite{Bus3}, Guggenheimer \cite{Gug1,Gug2} and Petty \cite{Pet}. They were used to study concepts of curvatures (and curve theory, in general), and they also appeared when studying Minkowski distances given by the solutions of certain second order ordinary differential equation (see \cite{P-B}). Recently, a new sine function introduced by Szostok \cite{szostok} was revisited and used for characterizations of Radon and Euclidean planes, and also for defining new geometric constants (see \cite{Ba-Ma-Te3,bmt,Ba-Ma-Te2}).

Our aim in this paper is to continue recent investigations on the cosine function $\mathrm{cm}$ defined for smooth Minkowski spaces in \cite[Chapter 8]{Tho} (and previously studied by Busemann). We begin by giving a new (equivalent) definition, which is more geometric in nature. Then we relate it to the sine function investigated in \cite{bmt}, in contrast to Busemann's and Thompson's work, where it is compared with another sine function.

Constructing a semi-inner product based on the function $\mathrm{cm}$ yields two interesting results: first, we prove that this function is symmetric if and only if the space is Euclidean. Second, we prove in a similar fashion as for Euclidean space the cosine can be given in terms of the Gateaux derivative of the norm. This can also be seen as a non-local characterization of such derivative.

In Section 3 we investigate the ``distortion" between the lengths of tangent segments drawn from an exterior point to the unit circle of a smooth and strictly convex Minkowski plane. For Radon planes a formula for the ratio between such lengths is given in terms of the function $\mathrm{cm}$, and we characterize the Euclidean plane as the unique for which there is no ``distortion" at all (revisiting a result of Wu \cite{Wu2}). A new, purely geometric property of Radon curves (in terms of parallelism) is also given.

Further interesting topics are dealt with in Section 4. We discuss some Euclidean properties of $\mathrm{cm}$ and investigate a symmetric cosine function based on $\mathrm{cm}$. Inspired by this, a simple characterization of Radon planes in terms of $\mathrm{cm}$ is also given. Finally, we study differentiation of trigonometric functions. An interesting interpretation of Radon planes as the ones for which the ``difference" to the Euclidean plane appears in second order derivatives is also given.

Now we introduce some basic concepts and notation. A \emph{Minkowski (or normed) space} is a finite dimensional real vector space $X$ endowed with a norm $||\cdot||$. Given such a space $X$ we write $X_{o} = X\setminus\{o\}$, where $o$ denotes the origin. We always denote the (closed) unit ball and the unit sphere (circle, if the dimension is $2$) of a normed space by $B$ and $S$, respectively. A segment joining two points $a$ and $b$ is denoted by $\mathrm{seg}[a,b]$, the line passing through the same points is denoted by $\left<a,b\right>$ and we use $\left[a,b\right>$ to represent a ray starting at $a$ and passing through $b$. In a normed space $(X,||\cdot||)$, we say that a vector $x$ is \emph{Birkhoff orthogonal} to a vector $y$ (denoted by $x\dashv_B y$) provided $||x+\lambda y|| \geq ||x||$ for any $\lambda \in \mathbb{R}$. A pair of independent vectors $x,y \in X_o$ for which $x \dashv_B y$ and $y \dashv_B x$ is called a \emph{conjugate pair} (see \cite{martini1} for a proof of the existence of such a base in any normed plane). A \emph{Radon plane} is a Minkowski plane for which Birkhoff orthogonality is symmetric. We assume that a Minkowski plane $(X,||\cdot||)$ is endowed with a non-degenerate symplectic bilinear form $[\cdot,\cdot]:X\times X\rightarrow \mathbb{R}$ (which is unique up to constant multiplication), and define the associated antinorm in $X$ by $||\cdot||_a = \sup\{|[\cdot,y]|:y\in S\}$. If the plane is Radon, we always assume that $[\cdot,\cdot]$ is rescaled in such a way that we have $||\cdot||_a=||\cdot||$ (see \cite{martiniantinorms}). Given an angle $\wk\mathbf{xoy}$ formed by two rays $\left.[o,x\right>$ and $\left.[o,y\right>$ we define its \emph{Glogovskii angular bisector} by the the locus of the centers of the circles inscribed in $\wk\mathbf{xoy}$ and its \emph{Busemann angular bisector} to be the ray $\left.\left[o,\frac{x}{||x||}+\frac{y}{||y||}\right>\right.$.

For general references to Minkowski geometry we cite the surveys \cite{martini2} and \cite{martini1}, as well as the book \cite{Tho}. For orthogonality in such spaces we refer the reader to \cite{alonso}, and to antinorms and Radon planes we recommend \cite{martiniantinorms}.

\section{The function cm}

\subsection{An equivalent definition} \label{define}

Thompson \cite[Chapter 8]{Tho} defines a cosine function $\mathrm{cm}:X_{o}\times X_{o}\rightarrow\mathbb{R}$ in a smooth Minkowski space by 
\begin{align*} \mathrm{cm}(x,y) = \frac{f_x(y)}{||f_x||_*||y||}, \end{align*}
where, up to a positive scalar multiplication, $f_x \in X^*$ is the unique linear functional which attains its norm at $\frac{x}{||x||}$, and $||\cdot||_*$ is the usual norm in the dual space $X^*$. This subsection is devoted to present an equivalent definition of the function $\mathrm{cm}$ which is, from our point of view, a little easier to work with. Also, this new definition establishes an interesting interplay with a variant of the sine function studied in \cite{szostok} and \cite{bmt}, as will become clear within the next subsection. \\

Assume that $(X,||\cdot||)$ is a smooth Minkowski plane, i.e., a plane for which there is only one supporting line to the unit ball $B$ at each point of the unit circle $S$. Due to smoothness, Birkhoff orthogonality is right unique, i.e., for each non-zero $x \in X$ there exists precisely one direction $y \in X$ such that $x \dashv_B y$. Thus, for such a space we may define a (continuous) map $b:X_o\rightarrow X_o$ which associates each $x \in X_o$ to the unique vector $b(x)\in X_o$ such that $||b(x)||_a = 1$, $x \dashv_B b(x)$ and $[x,b(x)] > 0$. Given $x \in X_o$ it is clear that the functional $y \mapsto [y,b(x)]$ assumes its norm at $\frac{x}{||x||}$ (cf. \cite{martiniantinorms}), and hence we can set
\begin{align}\label{cm1} \mathrm{cm}(x,y) = \frac{[y,b(x)]}{||y||}. \end{align}

We still can make an equivalent definition which is more geometric. We just have to use an argument as in \cite[Proposition 3.1]{bmt}: we write
\begin{align*} ||f_x|| = \sup_{t\in\mathbb{R}}\frac{[y+tb(x),b(x)]}{||y+tb(x)||} = \frac{[y,b(x)]}{\inf_{t\in\mathbb{R}}||y+tb(x)||}, \end{align*}
to obtain finally
\begin{align}\label{cm2} \mathrm{cm}(x,y) = \mathrm{sgn}\left([y,b(x)]\right)\frac{\inf_{t\in\mathbb{R}}||y+tb(x)||}{||y||}, \end{align}
where $\mathrm{sgn}$ denotes the usual sign function. The geometric interpretation is the following: if $x,y \in S$ are unit vectors, then the value of $\mathrm{cm}(x,y)$ is the distance from the parallel, passing through $y$, of the line which supports the unit circle at $x$ to the origin (see Figure \ref{fig1cosine}). \\

Now, we naturally extend this definition to spaces of higher dimension as follows: if $x,y \in X_o$, then $\mathrm{cm}(x,y)$ is calculated in a plane spanned by $x$ and $y$ endowed with the induced norm. Of course, such a plane is not unique if and only if $x$ and $y$ are dependent, and in this case the value of $\mathrm{cm}(x,y)$ is independent of the considered plane. Having said all this, it is clear that throughout the text every calculation will be made in planes, also when we are dealing with spaces with dimension $\geq 3$.

\begin{figure}

\includegraphics{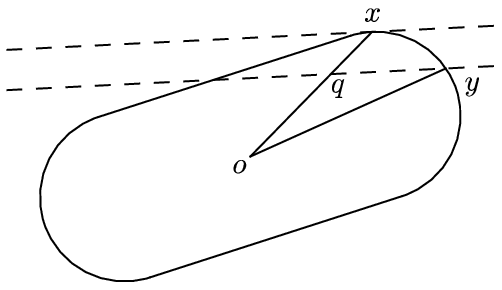}
\caption{$||q||=\mathrm{cm}(x,y)$}
\label{fig1cosine}
\end{figure}

\begin{remark} We could have also (equivalently) defined the function $\mathrm{cm}$ ``externally" as follows: consider $x,y \in S$. Then, we have that $\mathrm{cm}(x,y)$ is, up to sign, the inverse of the length of the segment whose endpoints are the origin and the intersection of the ray $\left.[o,y\right>$ and the supporting line to $S$ at $x$; or we have $\mathrm{cm}(x,y)=0$ if such an intersection does not exist. Petty and Barry (see (2.4) in \cite{P-B}) studied unit circles given by solutions of second-order differential equations of the type $u'' + R(t)u = 0$, giving suitably generalized trigonometric functions. Considering this ``external" definition, it is shown there that the given cosine function agrees with the function $\mathrm{cm}$, as it was noticed by Thompson \cite[Section 8.5]{Tho}.
\end{remark}

 We finish this subsection presenting (already known) early properties of the function $\mathrm{cm}$ concerning Birkhoff orthogonality and strict convexity of the norm. 

\begin{lemma}\label{lemmacosine}\normalfont We have $-1\leq \mathrm{cm}(x,y) \leq 1$ for every $x,y \in X_o$. We have equality $|\mathrm{cm}(x,y)| = 1$ if and only if $x$ and $y$ are dependent or $\mathrm{seg}\left[\frac{x}{||x||},\frac{y}{||y||}\right] \subseteq S$. Also, $\mathrm{cm}(x,y) = 0$ if and only if $x \dashv_B y$.
\end{lemma}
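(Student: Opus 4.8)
The plan is to work entirely from the two equivalent formulas \eqref{cm1} and \eqref{cm2}, since all three assertions reduce to statements about the quantity $\inf_{t\in\mathbb{R}}||y+tb(x)||$ and about Birkhoff orthogonality to the fixed direction $b(x)$. Throughout I set $u=\frac{x}{||x||}$ and $v=\frac{y}{||y||}$ and recall that, by the very definition of $b$, we have $x\dashv_B b(x)$, hence also $u\dashv_B b(x)$ by the (positive) homogeneity of Birkhoff orthogonality in its first slot.

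For the bounds I would argue directly from \eqref{cm2}. Taking $t=0$ gives $\inf_{t\in\mathbb{R}}||y+tb(x)||\leq ||y||$, while the infimum is obviously nonnegative; dividing by $||y||$ shows that the fraction in \eqref{cm2} lies in $[0,1]$. Since the sign factor has absolute value at most $1$, this yields $|\mathrm{cm}(x,y)|\leq 1$, that is, $-1\leq \mathrm{cm}(x,y)\leq 1$. For the vanishing case I switch to \eqref{cm1}: $\mathrm{cm}(x,y)=0$ is equivalent to $[y,b(x)]=0$, and since $[\cdot,\cdot]$ is non-degenerate this means precisely that $y$ is parallel to $b(x)$. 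Because the plane is smooth, Birkhoff orthogonality is right unique, so $b(x)$ spans the only direction to which $x$ is Birkhoff orthogonal; thus $y\parallel b(x)$ is equivalent to $x\dashv_B y$, settling this part.

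The heart of the matter is the equality case $|\mathrm{cm}(x,y)|=1$. By the bound computation this forces $\inf_{t\in\mathbb{R}}||y+tb(x)||=||y||$, i.e. $t=0$ already minimises $||y+tb(x)||$, which is by definition the relation $y\dashv_B b(x)$. Hence both $u$ and $v$ are Birkhoff orthogonal to $b(x)$, so the (unique, by smoothness) supporting lines of $B$ at $u$ and at $v$ are both parallel to $b(x)$. There are only two such supporting lines, and they are centrally symmetric. If $u$ and $v$ lie on the same one, then convexity together with the standard fact that a supporting line meets $B$ only along $S$ forces $\mathrm{seg}[u,v]\subseteq S$; if they lie on opposite ones, central symmetry places $-v$ on the supporting line through $u$ and the same argument gives $\mathrm{seg}[u,-v]\subseteq S$. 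When the norm is strictly convex both alternatives degenerate to $v=\pm u$, i.e. $x$ and $y$ dependent, which is the stated conclusion. Conversely, if $x$ and $y$ are dependent then $v=\pm u$ gives $v\dashv_B b(x)$ at once, and if $\mathrm{seg}\left[u,v\right]\subseteq S$ then the line through that boundary segment is the supporting line at $u$, namely the one in direction $b(x)$, so again $v\dashv_B b(x)$ and therefore $|\mathrm{cm}(x,y)|=1$.

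The main obstacle is exactly this equality case. The reduction to $y\dashv_B b(x)$ is clean and purely formal, but converting ``$u$ and $v$ share a supporting direction with $b(x)$'' into a statement about the segment joining them requires the convexity fact that a supporting line meets $B$ only in $S$, together with careful bookkeeping of the central symmetry of $S$ — and it is precisely here that the non--strictly--convex behaviour enters, since flat pieces of $S$ are what allow $\mathrm{seg}[u,v]\subseteq S$ (or $\mathrm{seg}[u,-v]\subseteq S$) to occur with $x$ and $y$ independent.
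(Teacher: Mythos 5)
Your argument follows the same route as the paper's: both reduce everything to the observation that $|\mathrm{cm}(x,y)|=1$ forces $y\dashv_B b(x)$ (you via the infimum in \eqref{cm2} being attained at $t=0$, the paper via $|[y,b(x)]|=\|y\|$ and $\|b(x)\|_a=1$), and the bounds and the vanishing case are handled identically. Your write-up is simply much more explicit where the paper says ``the desired follows,'' and the reasoning in it is sound.

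There is, however, one point you must not leave implicit. Your case analysis in the equality case is correct, and precisely because it is correct it proves a conclusion that differs from the one stated in the lemma: when $u$ and $v$ lie on \emph{opposite} supporting lines parallel to $b(x)$ you obtain $\mathrm{seg}[u,-v]\subseteq S$, not $\mathrm{seg}[u,v]\subseteq S$, and with $x,y$ independent this really happens. For instance, in a smooth norm whose unit circle contains the horizontal edges of the square $[-1,1]^2$ (corners rounded), take $u=(0,1)$ and $v=(1/2,-1)$: then $\mathrm{cm}(x,y)=-1$, $x$ and $y$ are independent, and $\mathrm{seg}[u,v]$ passes through the interior of $B$. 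So the correct equivalence is $|\mathrm{cm}(x,y)|=1$ iff $\mathrm{seg}[u,v]\subseteq S$ or $\mathrm{seg}[u,-v]\subseteq S$ (the sign of $\mathrm{cm}(x,y)$ determining which alternative occurs, since $\mathrm{cm}(x,y)=1$ places $v$ on the supporting line $\{z:[z,b(x)]=1\}$ through $u$, and $\mathrm{cm}(x,y)=-1$ places it on the antipodal one). As written, your last sentence of that paragraph claims ``the stated conclusion'' only under strict convexity, and in the general case you silently prove this corrected variant rather than the lemma verbatim. That is not a gap in your reasoning — it is an imprecision in the lemma's statement, which the paper's terse proof glosses over — but a finished write-up has to say explicitly that the statement needs the antipodal alternative added (or restrict the segment clause to $\mathrm{cm}(x,y)=1$).
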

\begin{proof} The inequality comes straightforwardly. If $|\mathrm{cm}(x,y)| = 1$, then we have $|[y,b(x)]| = ||y||$ and hence $y\dashv_B b(x)$, since $||b(x)||_a = 1$. The desired follows. The remaining is also immediate.

\end{proof}

\subsection{The signed sine function and its relation to cm}

The sine function $s:X_o\times X_o \rightarrow \mathbb{R}$ given by 
\begin{align}\label{classicsine} s(x,y) = \frac{\inf_{t\in\mathbb{R}}||x+ty||}{||x||} \end{align}
was studied in \cite{szostok} and \cite{bmt}. Within this subsection we study a slight modification in this function and study its relations with $\mathrm{cm}$. First, from (\ref{cm2}) we have that $|\mathrm{cm}(x,y)| = s(y,b(x))$. Also, it is known (see \cite{bmt}) that in a normed plane the sine function (\ref{classicsine}) can be given as \\
\begin{align*} s(x,y) = \frac{|[x,y]|}{||y||_a||x||},\end{align*}
where $||\cdot||_a = \sup\{[\cdot,z]:z\in S\}$ is, as usual, the \emph{antinorm} associated to the norm $||\cdot||$. Thus, we may think about a signed sine function, i.e., a function $\mathrm{sn}:X_o\times X_o \rightarrow \mathbb{R}$ defined as 
\begin{align*} \mathrm{sn}(x,y) = \frac{[x,y]}{||y||_a||x||}.\end{align*}
And we readily see that $\mathrm{cm}(x,y) = \mathrm{sn}(y,b(x))$. Notice that these functions coincide with the standard ones in the Euclidean plane (considering that $[\cdot,\cdot]$ is the usual determinant, of course).\\

We notice that we might expect some kind of polar coordinates for the unit circle involving the signed sine and the cosine functions. But this happens only in Radon planes.

\begin{lemma}\label{polarlemma} Assume that $(X,||\cdot||)$ is Radon and, rescaling if necessary, that $[x,b(x)] = 1$ whenever $x \in S$. If $x,y \in S$, we have 
\begin{align}\label{polarcoordinates} y = \mathrm{cm}(x,y)x + \mathrm{sn}(x,y)b(x). \end{align}
This does not necessarily hold if $(X,||\cdot||)$ is not Radon.
\end{lemma}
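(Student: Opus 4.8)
The plan is to exploit that we are working in a plane, where $\{x,b(x)\}$ is automatically a basis: the normalization $[x,b(x)]=1\neq 0$ forces $x$ and $b(x)$ to be linearly independent. Hence for $y\in S$ there are unique scalars $\alpha,\beta\in\mathbb{R}$ with
\begin{align*}
y = \alpha x + \beta b(x),
\end{align*}
and the whole statement reduces to identifying $\alpha=\mathrm{cm}(x,y)$ and $\beta=\mathrm{sn}(x,y)$.

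To compute the coefficients I would pair this decomposition with the symplectic form, once on the right against $b(x)$ and once on the left against $x$, using bilinearity together with $[x,x]=[b(x),b(x)]=0$ and $[x,b(x)]=1$. Pairing $[\,\cdot\,,b(x)]$ gives $[y,b(x)] = \alpha[x,b(x)] = \alpha$, while pairing $[x,\,\cdot\,]$ gives $[x,y] = \beta[x,b(x)] = \beta$. Thus $\alpha=[y,b(x)]$ and $\beta=[x,y]$ with no further effort.

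It then remains to recognize these two numbers as the trigonometric values. Since $y\in S$ we have $||y||=1$, so by definition $\mathrm{cm}(x,y)=[y,b(x)]/||y||=[y,b(x)]=\alpha$; this step needs only smoothness. For the second coefficient, $\mathrm{sn}(x,y)=[x,y]/(||y||_a||x||)$ with $||x||=1$, and the Radon hypothesis enters precisely here: the rescaling $||\cdot||_a=||\cdot||$ yields $||y||_a=||y||=1$, whence $\mathrm{sn}(x,y)=[x,y]=\beta$. Substituting gives $y=\mathrm{cm}(x,y)x+\mathrm{sn}(x,y)b(x)$, as claimed.

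Isolating where the Radon condition was used also explains why it cannot be dropped: in a general smooth plane the same computation gives $\beta=[x,y]=||y||_a\,\mathrm{sn}(x,y)$, so the identity would force $||y||_a=1$ for every $y\in S$, i.e.\ $||\cdot||_a=||\cdot||$ on $S$, which is exactly the Radon property. To make the failure concrete I would take a non-Radon smooth and strictly convex norm, for instance an $\ell^p$ norm with $p\neq 2$, whose antinorm is an $\ell^q$ norm, choose a unit vector $y$ with $||y||_a\neq 1$ (e.g.\ a symmetric point such as a multiple of $(1,1)$), and observe that the $b(x)$-coefficient of $y$ differs from $\mathrm{sn}(x,y)$. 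I do not anticipate a genuine obstacle; the only points needing care are verifying that the normalization $[x,b(x)]=1$ is consistent with $||b(x)||_a=1$ — it is, since the functional $y\mapsto[y,b(x)]$ attains its dual norm $||b(x)||_a$ at $x$ — and producing an explicit non-Radon example for the final assertion.
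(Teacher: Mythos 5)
Your proof is correct and follows essentially the same route as the paper's: decompose $y=\alpha x+\beta b(x)$ in the basis $\{x,b(x)\}$ and identify $\alpha=\mathrm{cm}(x,y)$ (using only $y\in S$ and $\|b(x)\|_a=1$) and $\beta=\mathrm{sn}(x,y)$ (using the Radon normalization $\|y\|_a=\|y\|=1$). Your explicit symplectic pairings and the discussion of why the identity forces $\|\cdot\|_a=\|\cdot\|$ on $S$ simply spell out what the paper leaves implicit, including its remark following the lemma about choosing $y\in S$ with $\|y\|_a\neq 1$.
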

\begin{proof} Write $y = \alpha x + \beta b(x)$ for $\alpha,\beta \in \mathbb{R}$. We have $\mathrm{cm}(x,y) = \mathrm{sn}(y,b(x)) = \alpha$, since $y \in S$ and $||b(x)||_a = 1$. On the other hand, $\mathrm{sn}(x,y) = \frac{\beta}{||y||_a} = \beta$, since $||\cdot||$ is Radon. 

\end{proof}

If the plane is not Radon, then it is not possible to assume that $[x,y] = 1$ whenever $x \dashv_B y$. Even if we fix a basis $\{x,b(x)\}$ and rescale $[\cdot,\cdot]$ in order to have $[x,b(x)] = 1$, equality (\ref{polarcoordinates}) is not necessarily true. Indeed, we just have to pick some $y \in S$ for which $||y||_a \neq 1$. Although, we have: \\

\begin {coro} Assume that $(X,||\cdot||)$ is a smooth normed plane, and let $ \{z,b(z)\} $ be a conjugate base. If $y \in S$ we have 
\begin{align}\label{conjugatepolarcoordinates} y = \mathrm{cm}(z,y)z + \mathrm{cm}(b(z),y)b(z). \end{align}

\end {coro}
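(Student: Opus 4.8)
The plan is to read off the two cosines directly from the coordinates of $y$ in the conjugate base, exactly mirroring the proof of Lemma \ref{polarlemma}, but with the global Radon hypothesis replaced by the fact that the base $\{z,b(z)\}$ is conjugate. I would write
\begin{align*}
y = \alpha z + \beta b(z), \qquad \alpha,\beta \in \mathbb{R},
\end{align*}
and aim to prove that $\alpha = \mathrm{cm}(z,y)$ and $\beta = \mathrm{cm}(b(z),y)$; substituting these back into $y = \alpha z + \beta b(z)$ gives (\ref{conjugatepolarcoordinates}) immediately. Throughout I use that $[\cdot,\cdot]$ is alternating, so $[z,z] = [b(z),b(z)] = 0$, the normalization $||b(z)||_a = 1$ built into the map $b$, and $z \in S$. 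Since the functional $y \mapsto [y,b(z)]$ attains its norm $||b(z)||_a = 1$ at $z/||z|| = z$, these give $[z,b(z)] = 1$.

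The coefficient $\alpha$ is the easy half and uses nothing about conjugacy. Directly from (\ref{cm1}),
\begin{align*}
\mathrm{cm}(z,y) = \frac{[y,b(z)]}{||y||} = [y,b(z)] = \alpha[z,b(z)] + \beta[b(z),b(z)] = \alpha,
\end{align*}
where we used $y \in S$ and the two facts recorded above; this is word-for-word the computation of the first coordinate in Lemma \ref{polarlemma}.

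For the coefficient $\beta$ the decisive point is the involutive behaviour of $b$ on a conjugate base, namely $b(b(z)) = -z$. To establish this I would argue as follows: conjugacy gives $b(z) \dashv_B z$, so the direction of $z$ is Birkhoff orthogonal to $b(z)$; by smoothness Birkhoff orthogonality is right unique, hence $b(b(z))$ must be a scalar multiple of $z$. The orientation requirement $[b(z),b(b(z))] > 0$ forces a \emph{negative} multiple, because $[b(z),z] = -[z,b(z)] < 0$, and the normalizations (with $z$ on the unit anticircle as well, so that $||z||_a = 1$) pin the scalar to $-1$. Granting $b(b(z)) = -z$,
\begin{align*}
\mathrm{cm}(b(z),y) = \frac{[y,b(b(z))]}{||y||} = [y,-z] = [z,y] = \alpha[z,z] + \beta[z,b(z)] = \beta,
\end{align*}
which finishes the identification.

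The step I expect to carry the whole argument is the identity $b(b(z)) = -z$, and it is precisely here that conjugacy is indispensable: without $b(z) \dashv_B z$ the Birkhoff-orthogonal direction to $b(z)$ has no reason to coincide with that of $z$, and then $\mathrm{cm}(b(z),\cdot)$ simply cannot reproduce the second coordinate. This is the structural reason why the symmetric formula (\ref{polarcoordinates}) of Lemma \ref{polarlemma} collapses off the Radon case and must be repaired by passing to a conjugate base. The only delicate bookkeeping is the normalization: to have simultaneously $[z,b(z)] = 1$ (needed for $\alpha$) and $b(b(z)) = -z$ (needed for $\beta$) one wants $z$ to lie on both the unit circle and the unit anticircle, which for a conjugate base is consistent because the ratio $||\cdot||_a/||\cdot||$ is common to $z$ and $b(z)$, as follows from $||z||_a\,||b(z)|| = [z,b(z)] = ||z||\,||b(z)||_a$. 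I would therefore fix this normalization explicitly at the outset rather than leave it implicit.
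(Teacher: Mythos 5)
Your proposal is correct and follows essentially the same route the paper intends (its proof is just the remark that the claim is ``straightforward by (\ref{polarcoordinates})''): write $y=\alpha z+\beta b(z)$, obtain $\alpha=\mathrm{cm}(z,y)$ exactly as in Lemma \ref{polarlemma}, and obtain $\beta=\mathrm{cm}(b(z),y)$ from the conjugacy relation $b(b(z))=-z$. Your explicit attention to the normalization is in fact a point the paper glosses over: without rescaling $[\cdot,\cdot]$ so that $||z||=||z||_a=1$ (equivalently, so that both members of the conjugate base are unit vectors), one only gets $b(b(z))=-z/||z||_a$ and hence $\mathrm{cm}(b(z),y)=\beta/||z||_a$, so the identity as printed implicitly presupposes the normalization you fix at the outset, which your observation $||z||_a||b(z)||=[z,b(z)]=||z||\,||b(z)||_a$ shows is always achievable.
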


\begin {proof} The proof is straightforward by (\ref{polarcoordinates}).
\end {proof}

In the conditions of the Lemma \ref{polarlemma}, notice that if we fix any $x \in S$ then, in view of (\ref{polarcoordinates}), the lines $\left<-x,x\right>$ and $\left<-b(x),b(x)\right>$ play the role of axes of the trigonometric circle of $X$. When we take $y$ ranging through the unit circle, the functions $y \mapsto \mathrm{sn}(x,y)$ and $y \mapsto \mathrm{cm}(x,y)$ behave similarly to the standard ones in the Euclidean plane.

\subsection{Semi-inner products and the cosine function}

Semi-inner products are natural generalizations of the inner product for normed spaces. A semi-inner product (in the sense of Lumer-Giles, see \cite{dragomirsemiinner}) is defined to be an application $(\cdot,\cdot)_s:X\times X \rightarrow \mathbb{R}$ for which, for any $x,y,z \in X$ and $\alpha,\beta \in \mathbb{R}$, the following holds:\\

\noindent\textbf{(a)} $(z,\alpha x + \beta y)_s = \alpha(z,x)_s + \beta(z,y)_s$; \\

\noindent\textbf{(b)} $(\alpha x, y)_s = \alpha (x,y)_s$;\\

\noindent\textbf{(c)} $(x,x)_s \geq 0$, with equality if and only if $x = 0$; and\\

\noindent\textbf{(d)} $(x,y)_s^2\leq (x,x)_s(y,y)_s$.\\

It is clear that a semi-inner product yields a norm by setting $||\cdot||_s = \sqrt{(\cdot,\cdot)_s}$, and it is known that for every normed space $(X,||\cdot||)$ there exists a semi-inner product $(\cdot,\cdot)_s$ whose associated norm $||\cdot||_s$ equals the original norm $||\cdot||$. It is also known that if the space is smooth, then such a semi-inner product is unique (see \cite{dragomirsemiinner}). In a smooth normed space this norm generating semi-inner product can be given in terms of the function $\mathrm{cm}$ (see \cite{shonoda} and \cite{shonoda-weiss}).

\begin{lemma} Let $(X,||\cdot||)$ be a smooth Minkowski space. The application $(\cdot,\cdot)_s:X\times X \rightarrow \mathbb{R}$ defined by $(x,y)_s = ||x||\cdot||y||\mathrm{cm}(x,y)$ if $x,y \in X_o$ and $(x,y)_s = 0$ if $||x||\cdot||y|| = 0$ is the unique norm generating semi-inner product in X.
\end{lemma}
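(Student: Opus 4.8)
The plan is to reduce the candidate $(\cdot,\cdot)_s$ to the classical norming-functional construction and then read off the four axioms. For $x \in X_o$, formula (\ref{cm1}) gives $(x,y)_s = ||x||\cdot||y||\,\mathrm{cm}(x,y) = ||x||\,[y,b(x)]$, so if I set $f_x(y) = [y,b(x)]$ then $(x,y)_s = ||x||f_x(y)$ with $f_x \in X^*$. First I would record the three facts I need about $f_x$: it is linear in $y$ (immediate, since $[\cdot,b(x)]$ is a linear form); $f_x(x) = [x,b(x)] = ||x||$ (because $\mathrm{cm}(x,x)=1$ by Lemma \ref{lemmacosine} together with the sign convention $[x,b(x)]>0$); and $||f_x||_* = 1$ (since $||b(x)||_a = 1$ means $\sup_{w\in S}|[w,b(x)]| = 1$, and the supremum is attained, positively, at $x/||x||$). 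Thus $f_x$ is exactly the unique norming functional at $x$, i.e. Thompson's $f_x$. I would also note the homogeneity $b(\alpha x) = \mathrm{sgn}(\alpha)b(x)$ for $\alpha \neq 0$: Birkhoff orthogonality and the normalization $||\cdot||_a = 1$ are unaffected by scaling, while the sign condition $[\alpha x, b(\alpha x)]>0$ forces the factor $\mathrm{sgn}(\alpha)$.

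With $(x,y)_s = ||x||f_x(y)$ in hand the verification is short. Axiom (a) is immediate from linearity of $f_z$. For axiom (b) I would split on the sign of $\alpha$ and use $||\alpha x|| = |\alpha|\,||x||$ together with $f_{\alpha x} = \mathrm{sgn}(\alpha)f_x$ (coming from $b(\alpha x)=\mathrm{sgn}(\alpha)b(x)$); the two signs multiply to $\alpha$, giving $(\alpha x,y)_s = \alpha(x,y)_s$. Axiom (c) and the norm-generating property are the single computation $(x,x)_s = ||x||f_x(x) = ||x||^2$, which is nonnegative, vanishes only at $x=o$, and has square root $||x||$. Axiom (d) is just Lemma \ref{lemmacosine}: $(x,y)_s^2 = ||x||^2||y||^2\mathrm{cm}(x,y)^2 \leq ||x||^2||y||^2 = (x,x)_s(y,y)_s$. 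In each case I would dispose of the degenerate instances $||x||\cdot||y||=0$ separately, where every term is $0$ by the stated convention.

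For uniqueness I would take an arbitrary norm-generating semi-inner product $(\cdot,\cdot)_s'$, fix $x \in X_o$, and consider $g_x(y) := (x,y)_s'/||x||$. By (a) the map $g_x$ is linear; the identity $(x,x)_s' = ||x||^2$ gives $g_x(x)=||x||$; and (d) gives $|g_x(y)| \leq ||y||$, so $||g_x||_* = 1$ with the norm attained at $x/||x||$. Smoothness says there is only one such functional, so $g_x = f_x$ and hence $(x,y)_s' = ||x||f_x(y) = (x,y)_s$.

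The computations are all routine; the two places that require care are the sign bookkeeping in axiom (b), which rests entirely on $b(-x)=-b(x)$, and the uniqueness step, which is the only point where smoothness is genuinely used — it is precisely the uniqueness of the norming functional at each direction that makes $(\cdot,\cdot)_s$ canonical. Everything else follows directly from the definition of $\mathrm{cm}$ and from Lemma \ref{lemmacosine}.
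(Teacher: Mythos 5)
Your proposal is correct and follows essentially the same route as the paper: both reduce the formula to $(x,y)_s = \|x\|\,[y,b(x)]$, read off the axioms from linearity of $y\mapsto[y,b(x)]$ together with $b(-y)=-b(y)$ and Lemma \ref{lemmacosine}, and derive uniqueness from smoothness. The only difference is that you spell out the uniqueness step (identifying $(x,\cdot)_s'/\|x\|$ with the unique norming functional at $x$) where the paper simply cites the known uniqueness of the norm-generating semi-inner product in smooth spaces.
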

\begin{proof} Since we have $\mathrm{cm}(x,y) = \mathrm{sn}(y,b(x)) = \frac{[y,b(x)]}{||y||}$ it follows that $(x,y)_s = ||x||\cdot[y,b(x)]$. All the desired properties come easily from this formula. Notice that $b(-y) = -b(y)$ for any $y \in X_o$. The original norm is re-obtained from $(\cdot,\cdot)_s$ since $\mathrm{cm}(x,x) = 1$ for any $x \in X_o$, and the uniqueness property comes immediately from the smoothness hypothesis.

\end{proof}

As a consequence we have a characterization of the spaces where $\mathrm{cm}$ is symmetric. Unlike the sine function, this property does not characterize Radon planes.

\begin{prop} \label{symmetricprop}If the cosine function of a normed space $(X,||\cdot||)$ is symmetric, then the norm is Euclidean.
\end{prop}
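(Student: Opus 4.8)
The plan is to leverage the semi-inner product $(x,y)_s = ||x||\cdot||y||\,\mathrm{cm}(x,y)$ from the preceding lemma, and to observe that symmetry of $\mathrm{cm}$ forces this semi-inner product to become a genuine bilinear inner product, from which the Euclidean conclusion follows immediately. First I would note that symmetry of $\mathrm{cm}$ transfers directly to the semi-inner product: for $x,y \in X_o$ we have $(x,y)_s = ||x||\cdot||y||\,\mathrm{cm}(x,y) = ||y||\cdot||x||\,\mathrm{cm}(y,x) = (y,x)_s$, the degenerate cases in which one of the norms vanishes being trivial. Hence $(\cdot,\cdot)_s$ is a \emph{symmetric} semi-inner product.

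The central step is to upgrade this symmetry, together with the semi-inner product axioms, to full bilinearity. Axiom (a) already provides linearity in the second slot, while axiom (b) gives only homogeneity (not additivity) in the first slot. Symmetry is exactly what repairs this: for any $x_1, x_2, y$,
\[ (x_1 + x_2, y)_s = (y, x_1 + x_2)_s = (y,x_1)_s + (y,x_2)_s = (x_1,y)_s + (x_2,y)_s, \]
so additivity in the first argument holds, and combined with axiom (b) the form is linear in each variable. Thus $(\cdot,\cdot)_s$ is a symmetric, positive definite (axiom (c)) bilinear form, i.e. a genuine inner product.

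It then remains only to identify the norm with the one induced by this inner product. Since $\mathrm{cm}(x,x) = 1$ yields $(x,x)_s = ||x||^2$, we have $||x|| = \sqrt{(x,x)_s}$, so $||\cdot||$ is induced by an inner product and is therefore Euclidean (equivalently, via the Jordan--von Neumann theorem, the parallelogram law holds). I do not anticipate a serious obstacle: the only delicate point is the passage from the one-sided linearity built into the Lumer--Giles axioms to genuine bilinearity, and this is precisely where the symmetry hypothesis is consumed. Everything else is formal.
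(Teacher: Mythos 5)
Your proposal is correct and takes essentially the same route as the paper: both deduce that the unique norm-generating semi-inner product $(x,y)_s = ||x||\cdot||y||\,\mathrm{cm}(x,y)$ inherits symmetry from $\mathrm{cm}$ and conclude that the norm comes from an inner product. The only difference is that you explicitly carry out the step ``symmetric semi-inner product $\Rightarrow$ bilinear, hence an inner product,'' which the paper states without proof; your derivation of additivity in the first slot from symmetry and axiom (a) is exactly the right way to fill that in.
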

\begin{proof} If $\mathrm{cm}$ is symmetric, then the norm is derived from a semi-inner product which is symmetric, and thus it is derived from an inner product. 

\end{proof}

\subsection{The Gateaux derivative of the norm}

In a smooth normed space $(X,||\cdot||)$ the Gateaux derivative of the norm inspires the functional
\begin{align*} g(x,y) = ||x||\lim_{t\rightarrow 0}\frac{||x+ty||-||x||}{t},
\end{align*}
which coincides with the inner product if the norm is Euclidean. Hence, denoting the inner product by $(\cdot,\cdot)$, the Euclidean norm derived from it by $||\cdot||_E$, and its associated functional by $g_E$, we can obtain the standard Euclidean cosine as follows: 
\begin{align*} \mathrm{cos}(x,y) = \frac{(x,y)}{||x||_E||y||_E} = \frac{g_E(x,y)}{||x||_E||y||_E}.
\end{align*}
 
The functional $g$ was extensively studied by Mili\v{c}i\v{c} for general normed spaces (see, e.g., \cite{milicic1973produit,milicic1987gortogonalite,milicic1987generalisation,milicic1990fonctionelle}). The main result of this subsection is the, perhaps surprising, fact that the cosine function of any smooth Minkowski space can be given in terms of the Gateaux derivative of its norm.

\begin{prop}\label{gateauxcm} In any smooth normed space $(X,||\cdot||)$ we have
\begin{align*} \mathrm{cm}(x,y) = \frac{g(x,y)}{||x||.||y||},
\end{align*}
for all $x,y \in X_o$.
\end{prop}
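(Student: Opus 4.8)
The plan is to reduce the whole statement to the single identity
\[
\phi_x(y) := \lim_{t\rightarrow 0}\frac{||x+ty||-||x||}{t} = [y,b(x)], \qquad x,y\in X_o.
\]
Granting this, since $g(x,y) = ||x||\,\phi_x(y)$ by definition, dividing by $||x||\cdot||y||$ gives $g(x,y)/(||x||\cdot||y||) = \phi_x(y)/||y|| = [y,b(x)]/||y||$, which is exactly $\mathrm{cm}(x,y)$ by (\ref{cm1}). So the entire proof amounts to identifying the Gateaux derivative functional $\phi_x$ with the functional $f_x(y):=[y,b(x)]$.

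To prove the identity I would show that both $\phi_x$ and $f_x$ are the \emph{same} normalized supporting functional of the norm at $x$, and then invoke smoothness for uniqueness. First I would record the standard properties of $\phi_x$. The smoothness hypothesis is precisely what guarantees that the two-sided limit exists and that $y\mapsto\phi_x(y)$ is linear. Convexity and positive homogeneity give $||x+ty||\leq ||x||+t||y||$ for $t>0$, hence $\phi_x(y)\leq ||y||$ for all $y$, so $||\phi_x||_*\leq 1$; on the other hand a direct computation (Euler's identity) yields $\phi_x(x) = ||x||$. Therefore $||\phi_x||_* = 1$ and $\phi_x$ attains its dual norm at $\frac{x}{||x||}$.

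Next I would check that $f_x$ shares these properties. Its dual norm is $\sup_{||y||=1}|[y,b(x)]| = ||b(x)||_a = 1$ by the definition of the antinorm, and, as already observed right after (\ref{cm1}), $f_x$ attains this norm at $\frac{x}{||x||}$; in particular $f_x(x) = [x,b(x)] = ||x||$. Thus $\phi_x$ and $f_x$ are two functionals of dual norm $1$, both attaining their norm at the point $\frac{x}{||x||}\in S$ with the same value there. Since $(X,||\cdot||)$ is smooth, the supporting functional at a point of $S$ is unique, and we conclude $\phi_x = f_x$, which is the required identity.

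The only delicate point is the appeal to the existence of the two-sided limit and the linearity of $\phi_x$: this is exactly where smoothness enters, and it is the content of the results of Mili\v{c}i\'{c} cited above, so I would either quote those or supply the short convexity argument that the left and right Gateaux derivatives coincide under smoothness. Everything else is a routine verification of the supporting-functional normalizations.
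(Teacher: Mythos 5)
Your proof is correct, but it follows a genuinely different route from the paper's. The paper disposes of the proposition in two lines: it cites the (known) fact that $g$ is itself a semi-inner product generating the norm, and then invokes the uniqueness of the norm-generating semi-inner product in a smooth space, which was established in the preceding lemma to be $(x,y)_s = ||x||\cdot||y||\,\mathrm{cm}(x,y)$; equality follows at once. You instead bypass the semi-inner-product machinery entirely and argue one level down, at the duality map: you show that the Gateaux derivative functional $\phi_x$ and the functional $y\mapsto[y,b(x)]$ are both norm-one supporting functionals of $S$ at $\frac{x}{||x||}$ taking the value $1$ there, and conclude by uniqueness of the supporting functional at a smooth point. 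Your normalizations all check out ($||\phi_x||_*\leq 1$ by convexity, $\phi_x(x)=||x||$ by homogeneity, $||[\,\cdot\,,b(x)]||_*=||b(x)||_a=1$, and $[x,b(x)]=||x||$ from the definition of $b$), and your flagged ``delicate point'' --- that smoothness is exactly what makes the two-sided limit exist and $\phi_x$ linear --- is the standard fact and is legitimately quotable. What your approach buys is self-containedness: it does not rely on the external citation that $g$ is a norm-generating semi-inner product, and it proves the stronger pointwise identity $\phi_x=[\,\cdot\,,b(x)]$ directly, which is precisely the corollary the paper states immediately afterwards without proof. What the paper's approach buys is brevity, since the semi-inner-product uniqueness lemma has already been paid for earlier in the section.
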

\begin{proof} It is known that the functional $g$ is a semi-inner product which generates the norm. By the uniqueness of such a semi-inner product it follows that $||x||.||y||\mathrm{cm}(x,y) = g(x,y)$. This concludes the proof.

\end{proof}

We highlight here that the main interest in the result probably relies in the fact that, in view of Subsection \ref{define}, we have now an easy geometric interpretation for the Gateaux derivative of the norm. First, if $x,y \in S$ are unit vectors of a normed plane, then the derivative of the norm at $x$ in the direction $y$ is the (signed) distance from the intersection of the parallel to the supporting line to $B$ at $x$ drawn through $y$ with the segment $\mathrm{seg}[-x,x]$ to the origin. Now we extend this approach to spaces of higher dimension in the usual way: for the derivative $g(x,y)$ we repeat the argument in a plane spanned by $x$ and $y$ endowed with the usual norm. \\

To finish this subsection we outline that, in a normed plane, the derivative of the norm can also be characterized by means of the application $b$.

\begin{coro} We have
\begin{align*} [y,b(x)] = \lim_{t\rightarrow0}\frac{||x+ty||-||x||}{t},
\end{align*}
for any $x,y \in X_o$.
\end{coro}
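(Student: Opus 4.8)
The plan is to read the identity off the two representations of $\mathrm{cm}(x,y)$ that have already been established, so that nothing beyond elementary algebra is required. On the one hand, the defining formula (\ref{cm1}) gives $\mathrm{cm}(x,y) = \frac{[y,b(x)]}{||y||}$. On the other hand, Proposition \ref{gateauxcm} expresses the very same quantity through the Gateaux functional $g$ as $\mathrm{cm}(x,y) = \frac{g(x,y)}{||x||\cdot||y||}$. The whole proof consists in equating these two expressions.

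Concretely, first I would substitute the definition $g(x,y) = ||x||\lim_{t\rightarrow 0}\frac{||x+ty||-||x||}{t}$ into the second formula. The factor $||x||$ in the numerator then cancels the $||x||$ in the denominator, leaving $\mathrm{cm}(x,y) = \frac{1}{||y||}\lim_{t\rightarrow 0}\frac{||x+ty||-||x||}{t}$. Setting this equal to the value $\frac{[y,b(x)]}{||y||}$ coming from (\ref{cm1}) and multiplying both sides by $||y||\neq 0$ produces exactly the asserted identity $[y,b(x)] = \lim_{t\rightarrow 0}\frac{||x+ty||-||x||}{t}$.

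I do not anticipate a genuine obstacle here, since the corollary is a direct bookkeeping consequence of the agreement between the two formulas for $\mathrm{cm}$. The only points worth a remark are that the limit defining $g$ exists precisely because of the smoothness hypothesis on the norm (which legitimizes the manipulation), and that, as elsewhere in the paper, for vectors in a space of dimension $\geq 3$ both $[y,b(x)]$ and the directional derivative are understood to be computed in the plane spanned by $x$ and $y$ with the induced norm. Hence the planar identity transfers verbatim to the general case, and the statement holds for all $x,y \in X_o$.
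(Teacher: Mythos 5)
Your proposal is correct and matches the paper's intent exactly: the corollary is stated there without proof as an immediate consequence of combining the definition $\mathrm{cm}(x,y)=\frac{[y,b(x)]}{||y||}$ from (\ref{cm1}) with Proposition \ref{gateauxcm}, which is precisely the cancellation you carry out. Your added remarks on smoothness and on the planar convention for higher dimensions are consistent with the paper's setup.
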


\begin{remark} Since the norm is a radial function, one may expect that its gradient flow is given by the lines which pass through the origin. This is indeed true in the following sense: one can define the gradient of the norm in a point $x \in X_o$ to be the direction $\nabla_{||\cdot||}(x) \in S$ for which the derivative of the norm at $x$ attains its maximum. From the above corollary this vector maximizes the map $y\mapsto[y,b(x)]$ in the unit circle, and hence it follows that $\nabla_{||\cdot||}(x) = \frac{x}{||x||}$.
\end{remark}

\section{The outer distortion functional}

Let $(X,||\cdot||)$ be a smooth and strictly convex normed plane. Then through any point $p \in \mathrm{int}(X\setminus B)$ one can draw exactly two tangent lines to $B$, each of them touching $\partial B$ in precisely one point. Denote these points by $q_1$ and $q_2$. In the Euclidean plane we certainly have $||p-q_1|| = ||p-q_2||$, but it is easy to see that this is not necessarily true for an arbitrary smooth Minkowski plane. Our objective is to study the ratio between pairs of such lengths in an arbitrary (smooth and strictly convex) Minkowski circle.\\

\subsection{Definition and early properties}

Consider the following construction: let $x,y$ be unit vectors such that the rays $\left.[o,x\right>$ and $\left.[o,y\right>$ form an angle (in other words, $x$ and $y$ are linearly independent). Let $C$ be any circle inscribed in this angle intersecting its sides $\left.[o,x\right>$ and $\left.[o,y\right>$ respectively in the points $\beta x$ and $\alpha y$ (see Figure \ref{figcomm2}). Hence, the number $\gamma(x,y) := \frac{||\beta x||}{||\alpha y||} = \frac{\beta}{\alpha}$ is clearly independent of the choice of the inscribed circle $C$.

\begin{figure}[h]

\includegraphics{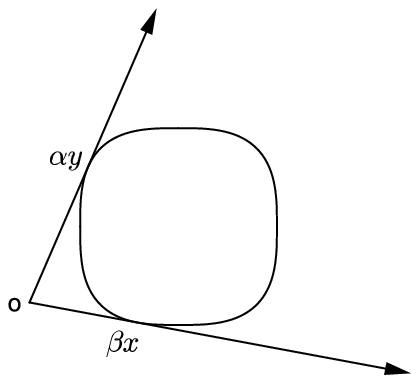}
\caption{$\gamma(x,y) = \frac{||\beta x||}{||\alpha y||}$}
\label{figcomm2}
\end{figure}

Let $D = \{(x,x):x \in S\}$ and $D'=\{(x,-x):x\in S\}$. Then, we define the \textit{outer distortion functional} of the Minkowski plane $(X,||\cdot||)$ to be the function $\gamma:(S\times S)\setminus(D\cup D') \rightarrow \mathbb{R}$ defined as above.

\begin{lemma}[Properties of $\gamma$]\label{propertiesouterfunctional} The outer distortion functional has the following properties:\\

\normalfont

\noindent\textbf{(a)} \textit{$\gamma(x,y) = \gamma(y,x)^{-1}$;} \\

\noindent\textbf{(b)} \textit{$\gamma(x,y) = \gamma(-x,-y)$; and}\\

\noindent\textbf{(c)} \textit{if $x \dashv_B y$ and $\gamma(x,y) = 1$, then $y \dashv_B x$. In particular, a plane is Radon if and only if $x\dashv_B y$ implies $\gamma(x,y) = 1$.}
\end{lemma}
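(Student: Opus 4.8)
The plan is to dispatch (a) and (b) from symmetries of the defining configuration and to concentrate the real work on (c). For (a), I would observe that $\gamma(y,x)$ is computed from the \emph{same} angle, hence from the same inscribed circles, only with the two sides interchanged: a circle meeting $[o,x\rangle$ at $\beta x$ and $[o,y\rangle$ at $\alpha y$ yields $\gamma(y,x)=||\alpha y||/||\beta x||=\alpha/\beta=\gamma(x,y)^{-1}$. For (b), I would use that the point reflection $z\mapsto -z$ is a linear isometry of $(X,||\cdot||)$: it carries the given angle onto the one spanned by $[o,-x\rangle$ and $[o,-y\rangle$, maps each inscribed circle to an inscribed circle of the reflected angle, and sends the contact points $\beta x,\alpha y$ to $\beta(-x),\alpha(-y)$; reading off the definition gives $\gamma(-x,-y)=\beta/\alpha=\gamma(x,y)$.

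For (c) I would pass to coordinates adapted to the pair, writing $x=e_1$ and $y=e_2$ (both unit), so that the two rays become the positive coordinate half-axes. Let $u=(u_1,u_2)$ be the point of $S$ at which the supporting line is parallel to $x$ (the ``top'' point, $u_2>0$) and $w=(w_1,w_2)$ the point at which it is parallel to $y$ (the ``right'' point, $w_1>0$); these are precisely the two unit directions that are left Birkhoff orthogonal to $x$ and to $y$, and smoothness with strict convexity makes each of them unique. The key computational step is to locate the inscribed circle: tangency to the $x$-axis forces the corresponding radius to be parallel to $u$, tangency to the $y$-axis forces the other radius to be parallel to $w$, and imposing equality of the two radii pins down the center. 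Solving these linear relations yields the tangent parameters and the clean formula $\gamma(x,y)=\frac{w_1-u_1}{u_2-w_2}$.

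It then remains to feed in the orthogonality hypotheses. Since $x\dashv_B y$ means the supporting line to $B$ at $x$ is parallel to $y$, in these coordinates it is vertical, so $x=(1,0)$ is itself the right point $w$; the formula collapses to $\gamma(x,y)=(1-u_1)/u_2$, and $\gamma(x,y)=1$ becomes $u_1+u_2=1$. Now $x=(1,0)$ and $y=(0,1)$ both lie on the line $a+b=1$, and by the relation just derived so does $u\in S$; as $S$ is strictly convex a line meets it in at most two points, forcing $u=y$ (it cannot be $x$, the top point having positive second coordinate). Hence the supporting line to $B$ at $y$ is parallel to $x$, i.e.\ $y\dashv_B x$, which is the first assertion. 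For the ``in particular'' the backward implication is then immediate from this, while for the forward one I would use that in a Radon plane $x\dashv_B y$ already gives $y\dashv_B x$, so $y=(0,1)$ becomes the top point $u$; substituting $u=(0,1)$ and $w=(1,0)$ into the formula gives $\gamma(x,y)=1$. The step I expect to be the main obstacle is the middle one: correctly translating ``tangent to both sides of the angle'' into Birkhoff orthogonality of the radii together with the equal-radius condition, and carrying the sign bookkeeping through to the ratio formula. Once that formula is in hand, the strict-convexity ``three collinear points of $S$'' argument finishing (c) is short and robust.
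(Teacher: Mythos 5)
Your proof is correct, but for part \textbf{(c)} it takes a genuinely different route from the paper's. The paper argues by contraposition using the parallelogram circumscribed to the unit circle with sides parallel to $x$ and $y$: if $x\dashv_B y$ but $y\not\dashv_B x$, the tangency points of the sides parallel to $x$ fail to be their midpoints, and a short length comparison (via strict convexity) shows the two tangent lengths from the relevant vertex cannot be equal, so $\gamma(x,y)\neq 1$; the remaining implications are then declared immediate. You instead solve for the inscribed circle explicitly in the adapted coordinates $x=e_1$, $y=e_2$: writing the center as $c=\beta x+ru=\alpha y+rw$ (the radius to each tangency point being the negative of the unit vector whose supporting line is parallel to the corresponding side) gives $\beta=r(w_1-u_1)$, $\alpha=r(u_2-w_2)$ and hence the closed formula $\gamma(x,y)=(w_1-u_1)/(u_2-w_2)$, after which $\gamma(x,y)=1$ becomes $u_1+u_2=1$ and the ``at most two points of $S$ on a line'' consequence of strict convexity forces $u=y$. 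This is sound, and it buys something the paper's argument does not: an explicit expression for $\gamma$ at an arbitrary pair of directions, from which the forward half of the ``in particular'' (Radon implies $\gamma=1$ on Birkhoff-orthogonal pairs) really is a one-line substitution $u=(0,1)$, $w=(1,0)$ rather than an assertion. Parts \textbf{(a)} and \textbf{(b)} are treated as immediate in the paper, and your symmetry arguments for them are fine.
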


\begin{proof} Assertions \textbf{(a)} and \textbf{(b)} are immediate. For \textbf{(c)}, assume that $x \dashv_B y$ but the converse is not true. Let $P$ be the parallelogram circumscribed to the unit circle whose sides are respectively parallel to $x$ and $y$. The segment connecting the midpoints of the sides in the direction of $y$ is parallel to $x$, and hence its length equals $2$, but the segment which connects the midpoints of the sides in direction of $x$ is not parallel to $y$, and by strict convexity it follows that its length is not $1$. It follows that we cannot have $\gamma(x,y) = 1$ (see Figure \ref{figcomm4}). The remaining part is immediate.

\end{proof}

\begin{figure}

\includegraphics{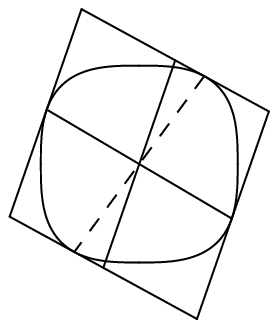}
\caption{$\gamma(x,y) \neq 1$}
\label{figcomm4}
\end{figure}

We finish this subsection showing, by means of a construction, that the outer distortion functional is not uniformly bounded, either from above or below (by a positive constant), for smooth normed planes. Fix numbers $1 < q \leq 2 \leq p < +\infty$ for which $\frac{1}{p} + \frac{1}{q} = 1$, and consider the standard space $\mathbb{R}^2$ endowed with the mixed $l_p-l_q$ norm:
\begin{align*} ||(\alpha,\beta)||_{p,q} = \left\{\begin{array}{ll} \left(|\alpha|^p+|\beta|^p\right)^{1/p} \ \mathrm{if} \ \alpha\beta \geq 0 \\ \left(|\alpha|^q + |\beta|^q\right)^{1/q} \ \mathrm{if} \ \alpha\beta \leq 0 \end{array}\right. .
\end{align*}
 
Let the tangents to the unit circle at the points $a = \left(-\frac{1}{2^{1/q}},\frac{1}{2^{1/q}}\right)$ and $b = (0,1)$ intersect at $c$. Thus, the outer distortion functional $\gamma_{p,q}$ evaluated at the pair of directions $(a-c,b-c)$ is the ratio between the Minkowski lengths of $\mathrm{seg}[a,c]$ and $\mathrm{seg}[b,c]$. It is easy to see that the angular coefficient of the tangent line to the unit circle at $a$ equals 1. Thus $c = \left(1-\frac{2}{2^{1/q}},1\right)$, and a simple calculation shows that
\begin{align*} \gamma_{p,q}(a-c,b-c) = 2^{1/p}\left(1+\frac{2^{-1/q}}{1-2^{1-1/q}}\right),
\end{align*}
where the right handed expression goes to infinity as $p \rightarrow +\infty$. Therefore, the outer distortion functional is not bounded from above (and due to Lemma \ref{propertiesouterfunctional}, also from below by a positive constant) for smooth normed planes. Actually, it is not bounded either from above or below, even in the class of smooth Radon planes (any mixed $l_p-l_q$ plane is Radon, see \cite{martiniantinorms}).

\subsection{The outer distortion of a Radon plane}

In the next theorem we characterize the outer distortion functional of a smooth Radon plane by means of the cosine function.

\begin{teo} \label{distortion} Let $(X,||\cdot||)$ be a smooth and strictly convex Radon plane. Then, for any independent vectors $x,y \in S$, we have
\begin{align}\label{distortionradon} \gamma(x,y) = \frac{\mathrm{cm}(x,x+y)}{\mathrm{cm}(y,x+y)}.
\end{align}
\end{teo}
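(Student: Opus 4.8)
The plan is to place the center of one inscribed circle $C$ in symplectic coordinates and then recover the tangent points $\beta x$ and $\alpha y$ from the orthogonality characterization of $\mathrm{cm}$. Since $\gamma(x,y)$ does not depend on the choice of $C$, I would fix a single inscribed circle of radius $r$ with center $w$, and write $w = px + qy$ with $p,q>0$ (the center lies in the interior of the angle, hence is a strictly positive combination of the independent unit vectors $x$ and $y$). The normalizations in force are $||\cdot||_a=||\cdot||$ (Radon) and $[x,b(x)]=1$ for $x\in S$ as in Lemma \ref{polarlemma}.

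First I would show that $w$ lies on the ray $\left.[o,x+y\right>$, i.e. that $p=q$. The center of an inscribed circle is equidistant from both sides of the angle, and the Minkowski distance from $w$ to the line $\left<o,u\right>$ (for a unit direction $u$) is $\inf_{t}||w+tu|| = |[w,u]|/||u||_a$ by the identity $s(x,y)=|[x,y]|/(||y||_a||x||)$ recalled in the previous subsection; since the plane is Radon, $||u||_a=||u||=1$ for $u\in\{x,y\}$, so this distance equals $|[w,u]|$. Computing $[w,x]=-q[x,y]$ and $[w,y]=p[x,y]$ with $[x,y]\neq 0$, the equidistance condition forces $|p|=|q|$, whence $p=q$ and $w=p(x+y)$.

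Next I would determine $\beta$ and $\alpha$. The tangent point $\beta x$ is the foot of the Birkhoff perpendicular from $w$ onto $\left<o,x\right>$, so $(w-\beta x)\dashv_B x$; because the plane is Radon this is equivalent to $x\dashv_B(w-\beta x)$, which by Lemma \ref{lemmacosine} means $\mathrm{cm}(x,w-\beta x)=0$, i.e. $[w-\beta x,b(x)]=0$. Expanding $w-\beta x=(p-\beta)x+py$ bilinearly and using $[x,b(x)]=1$ together with $[y,b(x)]=\mathrm{cm}(x,y)\,||y||=\mathrm{cm}(x,y)$ gives $\beta=p(1+\mathrm{cm}(x,y))$; the symmetric computation on the other side yields $\alpha=p(1+\mathrm{cm}(y,x))$. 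Hence $\gamma(x,y)=\beta/\alpha=(1+\mathrm{cm}(x,y))/(1+\mathrm{cm}(y,x))$.

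Finally I would match this with the asserted formula by expanding both cosines of $x+y$ bilinearly: $\mathrm{cm}(x,x+y)=[x+y,b(x)]/||x+y||=(1+\mathrm{cm}(x,y))/||x+y||$, and likewise $\mathrm{cm}(y,x+y)=(1+\mathrm{cm}(y,x))/||x+y||$, so the common factor $||x+y||$ cancels in the ratio and the two expressions coincide. The main obstacle is the first step: both the distance computation and the translation of the tangency condition into a linear equation in $b(x)$ depend essentially on the Radon hypothesis (antinorm equal to norm, and symmetry of Birkhoff orthogonality), which is precisely why the clean ratio fails for general smooth planes; everything after that is bilinear bookkeeping, with smoothness and strict convexity guaranteeing that $b$ is well defined and that the tangent points are the unique feet of the perpendiculars.
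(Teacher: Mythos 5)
Your proof is correct and follows essentially the same route as the paper: an inscribed circle tangent to the two sides, the tangency conditions $\beta x-w\dashv_B x$ and $\alpha y-w\dashv_B y$, and Radon symmetry of Birkhoff orthogonality to bring $b(x)$ and $b(y)$ into play. The two points where you do a bit more (or cleaner) work are that you prove inline, via the equidistance computation $\inf_t\|w+tu\|=|[w,u]|/\|u\|_a$, that the incenter lies on the ray from $o$ through $x+y$ (the paper simply invokes that $x+y$ lies on the Glogovskii bisector, i.e.\ the Glogovskii--Busemann coincidence for Radon planes), and that solving the linear equations $[w-\beta x,b(x)]=0$ and $[w-\alpha y,b(y)]=0$ gives $\beta=p(1+\mathrm{cm}(x,y))$ and $\alpha=p(1+\mathrm{cm}(y,x))$ directly, which makes the paper's separate sign argument (showing $\mathrm{cm}(x,x+y)\geq 0$) unnecessary; you might only add that strict convexity and independence of $x,y$ give $|\mathrm{cm}(y,x)|<1$, so the denominator $1+\mathrm{cm}(y,x)$ is nonzero.
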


\begin{proof} Assume first that $X$ is Radon and let $x,y$ be unit independent vectors. Hence, the point $x+y$ is a point of the Glogovskii angular bisector of the angle formed by the rays $\left.[o,x\right>$ and $\left.[o,y\right>$. Thus, there is an inscribed circle $C$ with center $x+y$ touching the sides of the angle in the points $\alpha y$ and $\beta x$, say. Hence, $\alpha y - (x+y) \dashv_B y$ and $\beta x - (x+y) \dashv_B x$. Recalling that Birkhoff orthogonality is symmetric in a Radon plane we have 
\begin{align*} |\mathrm{cm}(x,x+y)|= \frac{\left|[x+y,\beta x - (x+y)]\right|}{||\beta x -(x+y)||_a||x+y||} = \frac{\beta\left|[y,x]\right|}{||\beta x -(x+y)||_a||x+y||}; \ \mathrm{and} \\
\left|\mathrm{cm}(y,x+y)\right| = \frac{\left|[x+y,\alpha y - (x+y)]\right|}{||\alpha y - (x+y)||_a||x+y||} = \frac{\alpha\left|[x,y]\right|}{||\alpha y - (x+y)||_a||x+y||}.
\end{align*}
Thus, since $||\beta x-(x+y)||_a = ||\alpha y - (x+y)||_a$ we have the equality 
\begin{align*} \gamma(x,y) = \left|\frac{\mathrm{cm}(x,x+y)}{\mathrm{cm}(y,x+y)}\right|.
\end{align*}
Now we prove that we actually do not need to consider the absolute value. Indeed, if $x,y \in S$ we have $\mathrm{cm}(x,x+y) = \frac{[x+y,b(x)]}{||x+y||} = \frac{1+[y,b(x)]}{||x+y||}$. Since we are working with a Radon plane we have $|[y,b(x)]| \leq ||b(x)||_a||y|| = 1$, and hence $\mathrm{cm}(x,x+y) \geq 0$. This proves the desired.\\

\end{proof}

\begin{open} Does the converse to the previous theorem hold?
\end{open}

We emphasize in the next corollary that this result yields a geometric property of Radon curves that only relies on elementary concepts.

\begin{coro} Let $R$ be a Radon curve with center $o$ (for the sake of simplicity) and let $t_1, t_2$ be two non-parallel tangent lines intersecting $R$ in $q_1$ and $q_2$, respectively, and meeting at $p$. Through $o$, draw the lines parallel to $t_1$ and $t_2$ and assume that they intersect $R$ in $b_1$ and $b_2$, respectively. Denote by $b$ the intersection of the ray $\left.[o,b_1+b_2\right>$ with $R$. Let $s_1$ and $s_2$ be the supporting lines to $R$ at $b_1$ and $b_2$, respectively, and let the line parallel to $s_1$ through $b$ intersect the segment $\mathrm{seg}[-b_1,b_1]$ at the point $c_1$, and the line parallel to $s_2$ through $b$ intersect the segment $\mathrm{seg}[-b_2,b_2]$ at the point $c_2$. Then the line spanned by the tangency points $q_1$ and $q_2$ is parallel to the line through $c_1$ and $c_2$ (see Figure \ref{fig3cosine}).
\end{coro}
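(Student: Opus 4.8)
The plan is to reduce this purely geometric statement to the algebraic formula for $\gamma$ proved in Theorem \ref{distortion}, by showing that the two lines in question both have slope controlled by the same ratio of $\mathrm{cm}$-values. First I would set up coordinates adapted to the data: let $x = b_1$ and $y = b_2$ be the unit vectors in the directions of the parallels to $t_1$ and $t_2$ through $o$, so that $t_1$ is parallel to $b_1$ and $t_2$ is parallel to $b_2$. The tangency points $q_1, q_2$ are where the tangent lines in directions $b_1, b_2$ touch $R$; by the definition of $b(\cdot)$ and smoothness, the tangent line in direction $b_1$ touches $S$ at the point $z_1$ with $z_1 \dashv_B b_1$, i.e.\ $b(z_1)$ is proportional to $b_1$, and similarly for $q_2$. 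The point $p = t_1 \cap t_2$ plays the role of the center of an inscribed circle for the angle $\wk\mathbf{b_1 o b_2}$, so $\gamma(b_1,b_2)$ is exactly the ratio $\|p - q_1\|/\|p - q_2\|$ along the two tangent directions.

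The key step is to compute the direction of the chord $\langle q_1, q_2\rangle$ and the direction of the segment $\langle c_1, c_2\rangle$ and check they agree. For the chord $\langle c_1,c_2\rangle$: by the geometric interpretation of $\mathrm{cm}$ established in Subsection \ref{define}, the point $c_1$ on $\mathrm{seg}[-b_1,b_1]$ obtained by intersecting the parallel to $s_1$ through $b$ is precisely $\mathrm{cm}(b_1, b)\,b_1$, and likewise $c_2 = \mathrm{cm}(b_2,b)\,b_2$, where $b$ is the intersection of $[o, b_1+b_2\rangle$ with $R$, so $b = (b_1+b_2)/\|b_1+b_2\|$. Since $\mathrm{cm}$ is homogeneous of degree zero in its second argument, $\mathrm{cm}(b_i,b) = \mathrm{cm}(b_i, b_1+b_2)$. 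Thus
\begin{align*}
c_1 - c_2 = \mathrm{cm}(b_1,b_1+b_2)\,b_1 - \mathrm{cm}(b_2,b_1+b_2)\,b_2,
\end{align*}
and the direction of $\langle c_1,c_2\rangle$ is encoded by the coefficients $\mathrm{cm}(b_1,b_1+b_2)$ and $-\mathrm{cm}(b_2,b_1+b_2)$ in the basis $\{b_1,b_2\}$.

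For the chord $\langle q_1,q_2\rangle$ I would express $q_1$ and $q_2$ in the same basis $\{b_1,b_2\}$. Writing $p = \lambda_1 b_1 + \lambda_2 b_2$ and using that $q_1$ lies on the tangent line through $p$ in direction $b_1$ while $q_2$ lies on the tangent through $p$ in direction $b_2$, one gets $q_1 = p - \|p-q_1\|\,b_1$ and $q_2 = p - \|p - q_2\|\,b_2$ (after orienting $b_1,b_2$ so the signs work out), whence
\begin{align*}
q_1 - q_2 = \|p-q_2\|\,b_2 - \|p-q_1\|\,b_1.
\end{align*}
So the direction of $\langle q_1,q_2\rangle$ is given by the coefficients $-\|p-q_1\|$ and $\|p-q_2\|$ in $\{b_1,b_2\}$. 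Comparing the two directions, parallelism holds iff
\begin{align*}
\frac{\mathrm{cm}(b_1,b_1+b_2)}{\mathrm{cm}(b_2,b_1+b_2)} = \frac{\|p-q_1\|}{\|p-q_2\|} = \gamma(b_1,b_2),
\end{align*}
which is exactly the content of Theorem \ref{distortion} applied to $x=b_1$, $y=b_2$. I expect the main obstacle to be bookkeeping of signs and orientations: one must verify that $b(z_1)$ really points along $b_1$ (rather than against it) and that $p$ genuinely lies on the correct side so that the two tangent-segment lengths enter with consistent signs, matching the unsigned positivity of $\gamma$ and the nonnegativity of $\mathrm{cm}(b_i,b_1+b_2)$ noted at the end of the proof of Theorem \ref{distortion}. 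Once the orientation conventions are fixed consistently, the two coefficient ratios coincide by Theorem \ref{distortion} and the parallelism follows.
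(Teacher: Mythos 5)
Your proposal is correct and follows essentially the same route as the paper's (much terser) proof: identify $\|c_i\|$ with $|\mathrm{cm}(b_i,b_1+b_2)|$ via the geometric interpretation of $\mathrm{cm}$, invoke Theorem \ref{distortion} to equate the ratio $\mathrm{cm}(b_1,b_1+b_2)/\mathrm{cm}(b_2,b_1+b_2)$ with $\|q_1-p\|/\|q_2-p\|$, and conclude parallelism from the resulting proportionality in the basis $\{b_1,b_2\}$. The paper compresses all of this into two lines, so your write-up is simply a fuller version of the same argument, including the sign bookkeeping that the paper leaves implicit.
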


\begin{proof} Assume that $R$ is the unit circle of the Minkowski plane $(X,||\cdot||)$. We have $||c_1|| = |\mathrm{cm}(b_1,b_1+b_2)|$ and $||c_2|| = |\mathrm{cm}(b_2,b_1+b_2)|$. Hence, $\frac{||c_1||}{||c_2||} = \frac{||q_1-p||}{||q_2-p||}$. The desired follows. 

\end{proof}

\begin{remark} In \cite[Proposition 5.2]{Ba-Ma-Te} a characterization of Radon curves in terms of parallelism is given, and here we can give a characterization of such planes in terms of collinearity. Indeed, it is easy to see that the Glogovskii and Busemann angular bisectors always coincide if and only if the points $p$, $o$ and $b$ (as constructed above) are collinear. Since a plane is Radon if and only if these angular bisectors always coincide (see \cite{Duev2}) we have the desired characterization. Notice that this result still holds if the plane is not smooth or strictly convex, since in this case we can still define the tangents from an external point.
\end{remark}

\begin{figure}

\includegraphics{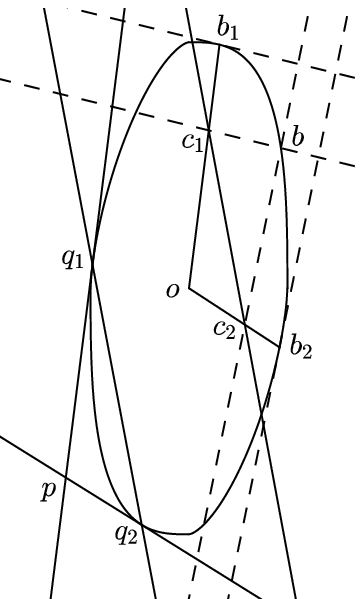}
\caption{Lines $\left<q_1,q_2\right>$ and $\left<c_1,c_2\right>$ are parallel}
\label{fig3cosine}
\end{figure}

Another consequence of Theorem \ref{distortion} is that the outer distortion functional can be continuously extended in a Radon plane. 

\begin{coro} Let $(X,||\cdot||)$ be a Radon plane with outer distortion functional $\gamma:(S\times S)\setminus(D\cup D')\rightarrow\mathbb{R}$. For any $x \in S$ we have
\begin{align*} \lim_{y\rightarrow x} \gamma(x,y) = \lim_{y\rightarrow -x} \gamma(x,y) = 1.
\end{align*}
\end{coro}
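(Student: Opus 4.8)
The plan is to feed the formula of Theorem \ref{distortion} into the limit and reduce everything to the behaviour of $\mathrm{cm}$ near the diagonal and the anti-diagonal. First I would rewrite the right-hand side of (\ref{distortionradon}) in a form where the factor $\|x+y\|$ disappears. Using (\ref{cm1}), the bilinearity of $[\cdot,\cdot]$, and $[u,b(u)] = \|u\|\,\mathrm{cm}(u,u) = \|u\|$, one gets for $x,y \in S$
\begin{align*}
\mathrm{cm}(x,x+y) = \frac{[x+y,b(x)]}{\|x+y\|} = \frac{1 + \mathrm{cm}(x,y)}{\|x+y\|}, \qquad \mathrm{cm}(y,x+y) = \frac{1 + \mathrm{cm}(y,x)}{\|x+y\|},
\end{align*}
so that Theorem \ref{distortion} yields the convenient identity
\begin{align*}
\gamma(x,y) = \frac{[x+y,b(x)]}{[x+y,b(y)]} = \frac{1 + \mathrm{cm}(x,y)}{1 + \mathrm{cm}(y,x)}.
\end{align*}

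For the first limit this does the job immediately. Since $\mathrm{cm}(u,v) = [v,b(u)]/\|v\|$ is jointly continuous on $X_o \times X_o$ (the map $b$ is continuous by Subsection \ref{define}, while $[\cdot,\cdot]$ and $\|\cdot\|$ are continuous, the latter nonvanishing off $o$), letting $y \to x$ gives $\mathrm{cm}(x,y) \to \mathrm{cm}(x,x) = 1$ and $\mathrm{cm}(y,x) \to \mathrm{cm}(x,x) = 1$; as the denominator tends to $2 \neq 0$, we obtain $\lim_{y\to x}\gamma(x,y) = 2/2 = 1$.

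The genuine difficulty is the second limit, because as $y \to -x$ the vector $x+y$ tends to $o$ and both $\mathrm{cm}(x,y)$ and $\mathrm{cm}(y,x)$ tend to $\mathrm{cm}(x,-x) = -1$, so the identity above becomes an indeterminate $0/0$ and plain continuity fails. Here I would resolve the indeterminacy locally. Writing
\begin{align*}
\gamma(x,y) - 1 = \frac{[x+y,\,b(x)-b(y)]}{[x+y,b(y)]},
\end{align*}
I would take a path $y(t) \in S$ with $y(0) = -x$ and examine numerator and denominator to leading order in $t$. The crucial structural input is the tangency relation: the unit-circle tangent at $y(t)$ points in the direction $b(y(t))$, i.e. $y'(t)$ is a scalar multiple of $b(y(t))$. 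Since $y(0)=-x$ forces $b(y(0)) = b(-x) = -b(x)$, the initial velocity $y'(0)$ is parallel to $b(x)$, whence $x+y(t)$ leaves $o$ tangentially in the $b(x)$-direction and $[x+y(t),b(x)]$ is of second order in $t$. Differentiating $y'(t) = \lambda(t)b(y(t))$ once expresses $y''(0)$ through $\tfrac{d}{dt}b(y(t))\big|_{0}$, and together with the Radon identities $b(b(u)) = -u$ and $[u,b(u)] = 1$ this forces the two second-order coefficients, those of $[x+y,b(x)]$ and of $[x+y,b(y)]$, to coincide; equivalently, the numerator $[x+y,\,b(x)-b(y)]$ is of strictly higher order than the denominator. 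Hence $\gamma(x,y)-1 \to 0$ and $\lim_{y\to -x}\gamma(x,y) = 1$.

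The main obstacle is precisely this last step: disposing of the $0/0$ at $y=-x$ needs a first-order analysis of the orthogonality map $b$ along $S$, and the non-trivial fact making the limit equal to $1$ (rather than some other constant) is the coincidence of the leading coefficients forced by the tangency relation together with the Radon symmetry $b^2=-\mathrm{id}$. I would also flag that this argument presupposes enough regularity of $S$ to differentiate $b$ along the curve; under the standing smoothness and strict convexity hypotheses this is the only point requiring real care.
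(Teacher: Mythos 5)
Your proposal is correct and takes essentially the same route as the paper: the paper likewise reduces to $\gamma(x,y) = \frac{1+[y,b(x)]}{1+[x,b(y)]}$, gets the first limit by continuity, and resolves the $0/0$ at $y=-x$ by applying L'Hospital's rule twice with respect to the arc length parameter --- which is precisely your second-order expansion along a path, using $y'\parallel b(y)$ and $b(b(u))=-u$ to kill the first-order terms and to match the second-order coefficients (both equal to $\rho(0)=\left|\left|\tfrac{d}{ds}b(\gamma_{\partial B}(s))\right|\right|$ at $-x$). The only point you share with the paper without addressing it is that this common leading coefficient must be nonzero for the conclusion $\lim N/D=1$; otherwise both approaches are the same computation in different clothing.
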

\begin{proof}  We write 
\begin{align*} \gamma(x,y) = \frac{1+[y,b(x)]}{1+[x,b(y)]}.
\end{align*}
Hence, the first limit comes straightforwardly. For the second one we apply the L'Hospital Rule (differentiating with respect to the normed arc length parameter, see Subsection \ref{differentiation}) twice to obtain 
\begin{align*} \lim_{y\rightarrow-x}\frac{1+[y,b(x)]}{1+[x,b(y)]} = \lim_{y\rightarrow-x}\frac{[b(y),b(x)]}{[x,-y]} = \lim_{y\rightarrow-x}\frac{[-y,b(x)]}{[x,-b(y)]} = 1.
\end{align*}

\end{proof}

\begin{remark} The above result means, in some sense, that Radon planes are ``locally Euclidean", i.e., for close directions ($y\rightarrow x$) or for close tangency points ($y\rightarrow-x$) the distortion is close to $1$.
\end{remark}

\begin{open} Does the same holds for non-Radon planes which are smooth and strictly convex?
\end{open}

In \cite{Wu2} it is proved that a Minkowski plane is Euclidean if and only if the tangent segments drawn from any external point to the unit circle have equal length. Using Theorem \ref{distortion} we can give a simple proof of this in the case where the plane is smooth and strictly convex.

\begin{prop} The outer distortion functional is identically $1$ if and only if the norm is Euclidean.
\end{prop}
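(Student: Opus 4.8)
The plan is to prove the two directions separately, with the reverse (Euclidean $\Rightarrow$ distortion $\equiv 1$) being essentially immediate and the forward direction being the substantive one. First I would dispose of the easy direction: if the norm is Euclidean, then through any external point the two tangent segments to the unit circle are congruent by the usual reflection symmetry of the Euclidean circle, so $\gamma \equiv 1$ directly from the geometric definition. No machinery from the cosine function is needed here.

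For the forward direction, suppose $\gamma(x,y) = 1$ for all independent $x,y \in S$. The key observation is that the hypothesis $\gamma \equiv 1$ together with part \textbf{(c)} of Lemma \ref{propertiesouterfunctional} forces the plane to be Radon: indeed, if $x \dashv_B y$ then by assumption $\gamma(x,y) = 1$, and part \textbf{(c)} yields $y \dashv_B x$, so Birkhoff orthogonality is symmetric. Once we know the plane is Radon, I would invoke Theorem \ref{distortion}, which gives the representation
\begin{align*} 1 = \gamma(x,y) = \frac{\mathrm{cm}(x,x+y)}{\mathrm{cm}(y,x+y)} \end{align*}
for all independent $x,y \in S$. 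This means $\mathrm{cm}(x,x+y) = \mathrm{cm}(y,x+y)$ for all such pairs.

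The heart of the argument is then to extract symmetry of $\mathrm{cm}$ from this identity and conclude via Proposition \ref{symmetricprop}. Using the polar-coordinate decomposition available in a Radon plane (Lemma \ref{polarlemma}) and the bilinearity encoded in the semi-inner product $(u,v)_s = ||u||\,||v||\,\mathrm{cm}(u,v)$, I would expand $\mathrm{cm}(x,x+y)$ and $\mathrm{cm}(y,x+y)$ in terms of $\mathrm{cm}(x,x)$, $\mathrm{cm}(x,y)$, $\mathrm{cm}(y,x)$ and $\mathrm{cm}(y,y)$. Since $\mathrm{cm}(x,x) = \mathrm{cm}(y,y) = 1$ for unit vectors, the equation $\mathrm{cm}(x,x+y) = \mathrm{cm}(y,x+y)$ should collapse, after clearing the common normalization by $||x+y||$, to $\mathrm{cm}(x,y) = \mathrm{cm}(y,x)$ for every independent pair of unit vectors. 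By homogeneity this extends to all of $X_o \times X_o$, so $\mathrm{cm}$ is symmetric, and Proposition \ref{symmetricprop} immediately gives that the norm is Euclidean.

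The main obstacle I anticipate is the middle step: the expansion $\mathrm{cm}(x,x+y)$ in the second argument is linear because $(\cdot,\cdot)_s$ is additive in its second slot, but $\mathrm{cm}(y,x+y)$ must also be handled carefully, and one must be sure the factor $||x+y||$ is genuinely common to both sides (which it is, since both cosines carry the same $||x+y||$ in the denominator of the semi-inner product form). I would need to check that no strict-convexity or smoothness subtlety obstructs dividing through, and that the reduction to $\mathrm{cm}(x,y) = \mathrm{cm}(y,x)$ is clean rather than merely giving $||x||\,\mathrm{cm}(x,y) = ||y||\,\mathrm{cm}(y,x)$; restricting to unit vectors $x,y \in S$ from the outset removes the norm factors and makes this transparent.
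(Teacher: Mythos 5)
Your proposal is correct and follows essentially the same route as the paper: reduce to the Radon case via Lemma \ref{propertiesouterfunctional}\textbf{(c)}, apply Theorem \ref{distortion} to get $\mathrm{cm}(x,x+y)=\mathrm{cm}(y,x+y)$, deduce symmetry of $\mathrm{cm}$, and conclude with Proposition \ref{symmetricprop}. The only cosmetic difference is that you expand via additivity of the semi-inner product in its second slot, whereas the paper expands $[x+y,b(x)]$ directly by bilinearity of $[\cdot,\cdot]$ --- these are the same computation, and your restriction to unit vectors does make the cancellation clean.
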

\begin{proof} 

First we argue that any plane whose outer distortion functional is identical to $1$ is Radon. Indeed, if this is the case, then in particular $x\dashv_B y$ implies that $\gamma(x,y) = 1$. According to Lemma \ref{propertiesouterfunctional} this characterizes Radon planes.\\

Now we can use Theorem \ref{distortion}. We have $\mathrm{cm}(x,x+y) = \mathrm{cm}(y,x+y)$ for all $(x,y) \in S\times S\setminus(D\cup D')$. Of course this equality also holds in $D\cup D'$. But this implies $[x,b(y)] = [y,b(x)]$ for any $x,y \in S$, and hence the function $\mathrm{cm}$ is symmetric. It follows that the norm is Euclidean.

\end{proof}

\section{Further topics}

\subsection{Almost Euclidean properties of cm}

Throughout this section we explore some geometric properties of the cosine function which are almost Euclidean, in some sense. 

\begin {prop} \label{equilateralprop} Assume that $(X,||\cdot||)$ is a Minkowski plane. If $x,y,z  \in X_o$ are the sides of an equilateral triangle oriented such that $x = y+z$, then we have 
\begin{align} \label{equilateraltriangle} \mathrm{cm}(x,y)+\mathrm{cm}(x,z)=1. \end{align}
\end {prop}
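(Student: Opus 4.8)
The plan is to exploit the linearity of the semi-inner product in its second argument, which is the cleanest tool available from Subsection 2.3. Recall that the function $(x,y)_s = ||x||\cdot||y||\mathrm{cm}(x,y) = ||x||\cdot[y,b(x)]$ is a semi-inner product, so in particular property \textbf{(a)} gives additivity in the second slot: $(x,u+v)_s = (x,u)_s + (x,v)_s$. Since the triangle is equilateral we may normalize so that $||x|| = ||y|| = ||z|| = 1$, and the orienting hypothesis reads $x = y + z$. The key identity I would write down is
\begin{align*}
\mathrm{cm}(x,y) + \mathrm{cm}(x,z) = \frac{[y,b(x)]}{||y||} + \frac{[z,b(x)]}{||z||} = [y,b(x)] + [z,b(x)] = [y+z,b(x)] = [x,b(x)].
\end{align*}
So the whole proposition reduces to showing $[x,b(x)] = 1$ under the equilateral normalization.

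The main step, then, is to pin down the normalization of the symplectic form $[\cdot,\cdot]$. Here I must be careful: the value of $[x,b(x)]$ depends on the scaling of $[\cdot,\cdot]$, whereas $\mathrm{cm}$ as defined in \eqref{cm1} is scaling-invariant because $b(x)$ is required to satisfy $||b(x)||_a = 1$, and the antinorm $||\cdot||_a = \sup\{|[\cdot,y]| : y\in S\}$ rescales in lockstep with $[\cdot,\cdot]$. The honest way to see that $\mathrm{cm}(x,x) = 1$ — which was already used in the proof that $(\cdot,\cdot)_s$ regenerates the norm — is exactly the statement that $[x,b(x)]/||x|| = 1$ for a unit vector $x$, i.e. $[x,b(x)] = 1$ automatically once $||b(x)||_a = 1$ and $x\dashv_B b(x)$. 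Indeed $\mathrm{cm}(x,x) = \mathrm{sn}(x,b(x))^{-1}$-style reasoning, or more directly Lemma \ref{lemmacosine}, forces $|\mathrm{cm}(x,x)| = 1$ and the sign is positive by the convention $[x,b(x)] > 0$. Thus for unit $x$ we have $[x,b(x)] = ||x||\,\mathrm{cm}(x,x) = 1$, and the chain above closes to give $\mathrm{cm}(x,y) + \mathrm{cm}(x,z) = 1$.

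The one subtlety I would flag as the genuine obstacle is making sure the additivity step is applied in the slot where $\mathrm{cm}$ really is linear. The cosine function is emphatically \emph{not} linear (nor even symmetric, by Proposition \ref{symmetricprop}) in its first argument, so it is essential that $x$ sits in the first slot throughout and that the decomposition $x = y+z$ is fed into the \emph{second} slot via $b(x)$, which is fixed once $x$ is fixed. Writing everything through $(x,\cdot)_s = ||x||\,[\,\cdot\,,b(x)]$ makes this transparent and avoids any temptation to distribute $\mathrm{cm}$ incorrectly. I would present the argument by first reducing to the normalized case, then invoking $[x,b(x)]=1$ for unit $x$, and finally applying bilinearity of $[\cdot,\cdot]$ in its first entry to $y+z$; no strict convexity, smoothness beyond the standing assumption, or Radon hypothesis is needed, so the statement holds for every smooth Minkowski plane.
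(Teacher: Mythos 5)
Your proposal is correct and follows essentially the same route as the paper: normalize to unit vectors, expand $\mathrm{cm}(x,\cdot)$ as $[\,\cdot\,,b(x)]$, use linearity of the symplectic form in its first entry, and conclude via $[x,b(x)]=1$. The extra care you take in justifying $[x,b(x)]=1$ and in flagging which slot carries the linearity is sound but not a different argument.
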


\begin{proof} We may assume, without loss of generality, that $x,y$ and $z$ are unit vectors. Thus,
\begin{align*} \mathrm{cm}(x,y)+\mathrm{cm}(x,z) = [y,b(x)] + [z,b(x)] = [x,b(x)] = 1,
\end{align*}
and this concludes the proof. 

\end{proof}

\begin{coro} If we define a cm-based symmetric cosine function by $\mathrm{ca}(x,y) =  \frac{\mathrm{cm}(x,y)+\mathrm{cm}(y,x)}{2}$, then in the hypothesis of the previous proposition we have 
\begin{align*}\mathrm{ca}(x,y)+\mathrm{ca}(x,z)+\mathrm{ca}(y,-z) = \frac{3}{2}.\end{align*}
\end{coro}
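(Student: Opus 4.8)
The plan is to reduce the claimed identity to the Proposition \ref{equilateralprop} that was just proved, together with one extra application of it. First I would unpack the definition of the symmetric cosine: since $\mathrm{ca}(x,y) = \frac{1}{2}\left(\mathrm{cm}(x,y)+\mathrm{cm}(y,x)\right)$, the left-hand side $\mathrm{ca}(x,y)+\mathrm{ca}(x,z)+\mathrm{ca}(y,-z)$ splits into
\begin{align*}
\tfrac{1}{2}\bigl(\mathrm{cm}(x,y)+\mathrm{cm}(x,z)\bigr) + \tfrac{1}{2}\bigl(\mathrm{cm}(y,x)+\mathrm{cm}(z,x)\bigr) + \tfrac{1}{2}\bigl(\mathrm{cm}(y,-z)+\mathrm{cm}(-z,y)\bigr).
\end{align*}
The first bracket is exactly the content of \eqref{equilateraltriangle}, so it contributes $\tfrac{1}{2}\cdot 1 = \tfrac{1}{2}$. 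The whole problem therefore collapses to evaluating the remaining two brackets, and the natural strategy is to find two more instances of the equilateral-triangle relation \eqref{equilateraltriangle} that handle them.

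The key observation is that an equilateral triangle with side vectors $x,y,z$ and orientation $x=y+z$ is highly symmetric, so each of the three vertices gives a relation of the same shape. To exploit the bracket $\mathrm{cm}(y,x)+\mathrm{cm}(z,x)$, I would reorient the triangle about the vertex where $y$ (or its reverse) plays the role of the long side: since $x=y+z$ we also have $y = x + (-z)$, which is again an equilateral triple with $y$ as the ``$x$-side'', so Proposition \ref{equilateralprop} yields $\mathrm{cm}(y,x)+\mathrm{cm}(y,-z)=1$. Similarly $z = x+(-y)$ gives $\mathrm{cm}(z,x)+\mathrm{cm}(z,-y)=1$. These two relations, normalized so that $x,y,z$ are unit vectors, are what I would combine with the first to produce the constant $\tfrac{3}{2}$.

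To finish I would assemble the pieces and check that the cross terms cancel. Summing the three equilateral-triangle identities $\mathrm{cm}(x,y)+\mathrm{cm}(x,z)=1$, $\mathrm{cm}(y,x)+\mathrm{cm}(y,-z)=1$, and $\mathrm{cm}(z,x)+\mathrm{cm}(z,-y)=1$ gives total $3$, and the goal is to recognize $2\bigl(\mathrm{ca}(x,y)+\mathrm{ca}(x,z)+\mathrm{ca}(y,-z)\bigr)$ inside this sum. The cleanest route is to work directly at the level of $\mathrm{cm}(u,v)=[v,b(u)]$ for unit vectors: with $x,y,z$ unit and $x=y+z$, each $\mathrm{cm}$ term becomes a symplectic pairing $[\,\cdot\,,b(\cdot)]$, and the whole computation becomes bilinear bookkeeping using $b(-w)=-b(w)$ and $[x,b(x)]=1$.

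The main obstacle I anticipate is the orientation/normalization subtlety: ``equilateral'' here means the three side vectors have equal norm and sum appropriately, but the $\mathrm{cm}$ function is not symmetric, so I must be careful that each reoriented triple $(y;x,-z)$ and $(z;x,-y)$ genuinely satisfies the hypothesis $x'=y'+z'$ of Proposition \ref{equilateralprop} with the correct vector in the first slot, and that passing to unit vectors does not alter the $\mathrm{cm}$ values (it does not, since $\mathrm{cm}$ is invariant under positive scaling of either argument). Verifying that the signs emerging from $y=x-z$ and $z=x-y$ match the $-z$ and $y$ that appear in $\mathrm{ca}(y,-z)$ is the one place where a sign error could slip in, so that is where I would spend the care; once the three relations are correctly aligned, the arithmetic giving $\tfrac{3}{2}$ is immediate.
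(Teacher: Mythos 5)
Your proposal is correct; the paper states this corollary without proof, and your argument --- applying Proposition \ref{equilateralprop} three times, to the triples $(x;y,z)$, $(y;x,-z)$ and $(z;x,-y)$, and matching the resulting six terms against $2\bigl(\mathrm{ca}(x,y)+\mathrm{ca}(x,z)+\mathrm{ca}(y,-z)\bigr)$ --- is evidently the intended one. The one sign check you flag does go through: $\mathrm{cm}(z,-y)=\mathrm{cm}(-z,y)=-\mathrm{cm}(z,y)$ by $b(-w)=-b(w)$ and homogeneity, so the term $\mathrm{cm}(z,-y)$ from your third identity is exactly the $\mathrm{cm}(-z,y)$ appearing in $\mathrm{ca}(y,-z)$, and the arithmetic yields $\tfrac{3}{2}$.
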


In \cite{bmt} it is proved that the sine function has Euclidean behavior within isosceles triangles: the angles of the base have the same sine. Although the cosine function does not have necessarily this property, it also has a Euclidean property for isosceles triangles: angles determined by an altitude drawn from the vertex which join equal sides have the same cosine. This is proved now.

\begin{prop} Let $\Delta\mathbf{oxy}$ be an isosceles triangle with $||x|| = ||y||$, and let $\mathrm{seg}[o,z]$ be an altitude in the Birkhoff sense ($z\dashv_By-x$). Thus, we have $\mathrm{cm}(z,x) = \mathrm{cm}(z,y)$.
\end{prop}

\begin{proof} Assume, without loss of generality, that $[z,y-x] > 0$. We just have to calculate 
\begin{align*} \mathrm{cm}(z,x) = \frac{[x,b(z)]}{||x||} = \frac{[x,y-x]}{||y-x||_a||x||} = \frac{[x,y]}{||y-x||_a||x||}, \ \mathrm{and}\\ \mathrm{cm}(z,y) = \frac{[y,b(z)]}{||y||} = \frac{[y,y-x]}{||y-x||_a||y||} = \frac{[x,y]}{||y-x||_a||x||}.
\end{align*}
And this finishes the proof.

\end{proof}

We finish this section presenting a characterization of the Glogovskii angular bisector in terms of the function $\mathrm{cm}$.

\begin{prop} Let $\left.[o,x\right>$ and $\left.[o,y\right>$ be two rays forming an angle, such that its Glogovskii bisector is $g$. Thus, $v \in g$ if and only if $\mathrm{cm}(\alpha x-v,v) = \mathrm{cm}(\beta y-v,v)$, where $\alpha,\beta \geq 0$ are the (unique) numbers for which $\alpha x - v \dashv_B x$ and $\beta y - v \dashv_B y$ respectively.
\end{prop}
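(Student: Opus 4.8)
The plan is to reduce the statement to the elementary fact that $v$ is the center of an inscribed circle precisely when its distances to the two sides of the angle coincide, and then to recognize each of these distances as $||v||$ times the magnitude of an appropriate value of $\mathrm{cm}$.

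First I would fix the orientation so that $[x,y]>0$ and note that a point $v$ in the interior of the angle can be written $v=sx+ty$ with $s,t>0$. By the definition of $g$, we have $v\in g$ if and only if $v$ is the center of a circle tangent to both rays, i.e.\ if and only if $\mathrm{dist}(v,\langle o,x\rangle)=\mathrm{dist}(v,\langle o,y\rangle)$. Since $\alpha x$ is by construction the foot of the Birkhoff perpendicular from $v$ to $\langle o,x\rangle$, we have $\mathrm{dist}(v,\langle o,x\rangle)=||\alpha x-v||=\inf_{t}||v+tx||=||v||\,s(v,x)$, and likewise $\mathrm{dist}(v,\langle o,y\rangle)=||v||\,s(v,y)$. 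Hence $v\in g$ if and only if $s(v,x)=s(v,y)$.

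Next I would compute the two cosines. Because $\alpha x-v\dashv_B x$, right-uniqueness of Birkhoff orthogonality forces $b(\alpha x-v)$ to be a nonzero scalar multiple of $x$, namely $\pm x/||x||_a$, the sign being pinned down by the requirement $[\alpha x-v,b(\alpha x-v)]>0$. Using the relation $|\mathrm{cm}(u,w)|=s(w,b(u))$ established above, together with the fact that $s(v,\cdot)$ depends only on the \emph{direction} of its second argument, I obtain $|\mathrm{cm}(\alpha x-v,v)|=s(v,x)$ and, in the same way, $|\mathrm{cm}(\beta y-v,v)|=s(v,y)$. Combined with the previous paragraph this already gives $v\in g$ if and only if $|\mathrm{cm}(\alpha x-v,v)|=|\mathrm{cm}(\beta y-v,v)|$, so it only remains to promote this equality of absolute values to an equality of the signed quantities.

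This sign bookkeeping is the one genuinely delicate point, and it is where I expect the main work to lie. Writing $\mathrm{cm}(\alpha x-v,v)=[v,b(\alpha x-v)]/||v||$, the sign of the left-hand side is the sign of $[v,b(\alpha x-v)]$. Evaluating $[\alpha x-v,x]=-[v,x]=t[x,y]>0$ forces $b(\alpha x-v)=x/||x||_a$, so $\mathrm{cm}(\alpha x-v,v)$ carries the sign of $[v,x]=-t[x,y]<0$. On the $y$-side, $[\beta y-v,y]=-[v,y]=-s[x,y]<0$ forces the opposite choice $b(\beta y-v)=-y/||y||_a$, so $\mathrm{cm}(\beta y-v,v)$ carries the sign of $-[v,y]=-s[x,y]<0$. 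Thus both cosines are negative for every $v$ in the interior of the angle, whence equality of their absolute values is equivalent to equality of the signed values themselves; chaining this with $v\in g\iff s(v,x)=s(v,y)$ yields the claimed equivalence. The converse direction needs no extra care, since if the two signed cosines agree, taking absolute values immediately returns $s(v,x)=s(v,y)$ and hence $v\in g$.
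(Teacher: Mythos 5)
Your proof is correct and takes essentially the same route as the paper's: both arguments identify $b(\alpha x-v)$ and $b(\beta y-v)$ as $\pm x/\|x\|_a$ and $\pm y/\|y\|_a$, rewrite the two cosines as signed sines of $v$ against $x$ and $y$, and reduce the claim to the equal-distance (sine) characterization of the Glogovskii bisector. The only difference is cosmetic: you derive that characterization directly from the inscribed-circle definition and track both signs explicitly, whereas the paper cites an adaptation of Proposition 3.3 of \cite{bmt} and settles the signs by a without-loss-of-generality orientation choice.
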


\begin{proof} First, notice that $[\alpha x-v,x]$ and $[\beta y-v,y]$ have different signs, and we may assume that, without loss of generality, $[\beta y-v,v] > 0$. We have 
\begin{align*}\mathrm{cm}(\alpha x-v,v) = \frac{[v,b(\alpha x-v)]}{||v||} = \frac{[v,-x]}{||x||_a||v||} =  -\mathrm{sn}(v,x),
\end{align*}
and similarly we have $\mathrm{cm}(\beta y-v,v) = \mathrm{sn}(v,y)$. A little adaptation of Proposition 3.3 in \cite{bmt} shows that $v \in g$ if and only if $\mathrm{sn}(v,y) = -\mathrm{sn}(v,x)$. Thus, we have indeed the desired.

\end{proof}

For the Busemann angular bisector we have an almost Euclidean property. In the Euclidean plane, the length of a diagonal of a parallelogram with unit sides equals twice the cosine of the angle it determines with one of its adjacent sides. Clearly, if $\left.[o,z\right>$ is the Busemann angular bisector of the angle formed by the rays $\left.[o,x\right>$ and $\left.[o,y\right>$, where $x$ and $y$ are unit vectors, then $\mathrm{cm}(z,x) + \mathrm{cm}(z,y) = ||x+y||$.

\subsection{Symmetric cosine function}

Proposition \ref{symmetricprop} shows that the function $\mathrm{cm}$ is symmetric if and only if the plane is Euclidean. Within this subsection we construct a $\mathrm{cm}$-based symmetric cosine function and explore its properties. This approach was already studied by Shonoda and Weiss \cite{shonoda-weiss}. Mili\u{c}i\u{c} \cite{milicic2007b} also investigated a symmetric cosine function constructed from a certain non-symmetric cosine function other than ours.

\begin{definition} We define the function $\mathrm{cn}:X_{o}\times X_{o}\rightarrow\mathbb{R}$ in a smooth Radon plane to be
\begin{align*} \mathrm{cn}(x,y) =  \sqrt{ \mathrm{cm}(x,y) \cdot \mathrm{cm}(y,x)}.
\end{align*}
 \end{definition}

At this point the reader may be wondering why the function $\mathrm{cn}$ is defined only for Radon planes. The fact is that it is only well defined in such planes, as the next proposition shows. Notice that it is also a (until now missing) characterization of Radon planes in terms of $\mathrm{cm}$.

\begin{prop} Let $(X,||\cdot||)$ be a smooth normed plane. Then, the norm is Radon if and only if $\mathrm{cm}(x,y).\mathrm{cm}(y,x) \geq 0$ for any $x,y \in X_o$. 
\end{prop}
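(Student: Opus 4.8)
The plan is to reduce the sign of the product to a statement purely about Birkhoff orthogonality, and then, for a fixed $x$, to compare the zero sets of the two factors as $y$ ranges over $S$. Since $\mathrm{cm}(x,y)=[y,b(x)]/||y||$ and $\mathrm{cm}(y,x)=[x,b(y)]/||x||$, the product $\mathrm{cm}(x,y)\mathrm{cm}(y,x)$ has the same sign as $[y,b(x)][x,b(y)]$, and by Lemma \ref{lemmacosine} the two factors vanish precisely when $x\dashv_B y$ and when $y\dashv_B x$, respectively. Fixing $x\in S$ and regarding each factor as a continuous function of $y\in S$ (continuity of $b$ being guaranteed by smoothness), the whole question becomes how the sign patterns of these two functions interlock, which is controlled by the relative position of their zeros on the circle.

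For the forward implication I would use that Radon planes have symmetric Birkhoff orthogonality: then $\{y\in S:y\dashv_B x\}=\{y\in S:x\dashv_B y\}$, and by the right-uniqueness of Birkhoff orthogonality (again from smoothness) this common set is exactly $\{\pm b(x)\}$. Hence both factors, viewed as functions of $y$, vanish on exactly the same pair of antipodal directions and therefore keep a constant sign on each of the two open arcs into which $\{\pm b(x)\}$ splits $S$. Evaluating the product at the interior points $x$ and $-x$ gives $\mathrm{cm}(x,x)\mathrm{cm}(x,x)=1$ and $\mathrm{cm}(x,-x)\mathrm{cm}(-x,x)=(-1)(-1)=1$, so the product is nonnegative on all of $S$; positive homogeneity then extends the inequality to all of $X_o$.

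For the converse I argue by contraposition. If the plane is not Radon there is a unit vector $x$ with $x\dashv_B b(x)$ but $b(x)\not\dashv_B x$; equivalently, $\mathrm{cm}(x,b(x))=0$ while $\mathrm{cm}(b(x),x)\neq 0$. Because $y\mapsto\mathrm{cm}(x,y)$ is, up to the positive factor $1/||y||$, the linear functional $y\mapsto[y,b(x)]$, it changes sign transversally as $y$ crosses the direction $b(x)$, whereas $y\mapsto\mathrm{cm}(y,x)$ is continuous and nonzero at $b(x)$ and hence keeps its sign there. Consequently, for $y$ on the appropriate side of $b(x)$ the two factors have opposite signs and the product is strictly negative, contradicting the hypothesis.

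The main obstacle is the forward direction, and specifically the possibility that one of the two factors could vanish on an entire arc rather than at isolated points, since the plane is only assumed smooth, not strictly convex, so $S$ may contain flat pieces and left Birkhoff orthogonality need not be unique. The point that makes the argument go through is that in a smooth Radon plane the symmetry of orthogonality collapses the condition $y\dashv_B x$ back to the right-orthogonality condition $x\dashv_B y$, whose solution set is forced by smoothness to be the single antipodal pair $\{\pm b(x)\}$; this is what guarantees that the two zero sets coincide exactly and that no spurious interior sign change occurs.
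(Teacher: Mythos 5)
Your argument is correct, but it follows a genuinely different route from the paper. The paper's proof is algebraic: it invokes Corollary 3 of \cite{martiniantinorms} (a plane is Radon if and only if $x\dashv_B\alpha x+y$ and $y\dashv_B\beta y+x$ force $\alpha\beta\geq 0$), writes $b(x)=\lambda(\alpha x+y)$ and $b(y)=\sigma(\beta y+x)$, deduces $\lambda\sigma\leq 0$ from the normalizations $[x,b(x)]=[y,b(y)]=1$, and computes $\mathrm{cm}(x,y)\,\mathrm{cm}(y,x)=-(\alpha\beta)(\lambda\sigma)[x,y]^2/(||x||\,||y||)$, so that the sign of the product coincides with the sign of $\alpha\beta$ and both directions of the equivalence drop out at once from the cited characterization. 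Your proof instead is a self-contained connectedness and sign-change argument: you identify the zero sets of the two factors with the orthogonality conditions via Lemma \ref{lemmacosine}, use symmetry of $\dashv_B$ plus right-uniqueness to make the two zero sets coincide in the Radon case (correctly noting that this is what rules out a factor vanishing on a whole arc), and for the converse exploit that $y\mapsto[y,b(x)]$ changes sign across the direction $b(x)$ while the continuous factor $y\mapsto[x,b(y)]$ does not. What your version buys is independence from the external Corollary 3 and a transparent picture of \emph{why} a negative product detects asymmetry of Birkhoff orthogonality; what the paper's version buys is brevity and an explicit formula for the product. Two cosmetic points only: the common zero set on $S$ is $\{\pm b(x)/||b(x)||\}$ rather than $\{\pm b(x)\}$ (only the directions matter, so nothing breaks), and the extension from $S\times S$ to $X_o\times X_o$ by positive homogeneity is fine since every vector in $X_o$ is a positive multiple of a unit vector.
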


\begin{proof} We use Corollary 3 of \cite{martiniantinorms}, which states that a plane is Radon if and only if the following holds: for any $x,y \in X_o$, if $x \dashv_B \lambda x + y$ and $y\dashv_B \mu y + x$, then $\lambda\mu \geq 0$.\\

Let $x,y \in X_o$ be independent vectors, and write $b(x) = \lambda(\alpha x + y)$ and $b(y) = \sigma(\beta y + x)$ for non-zero $\lambda,\sigma \in \mathbb{R}$. Thus, we have $x \dashv_B \alpha x + y$ and $y \dashv_B \beta y + x$. Notice that
\begin{align*} 1 = [x,b(x)] = [x,\lambda(\alpha x + y)] = \lambda[x,y] \ \mathrm{and} \\ 1 = [y,b(y)] = [y,\sigma(\beta y + x)] = \sigma[y,x],
\end{align*}
and hence $\lambda\sigma \leq 0$. On the other hand,
\begin{align*} \mathrm{cm}(x,y)\cdot\mathrm{cm}(y,x) = \frac{[y,b(x)]}{||y||}.\frac{[x,b(y)]}{||x||} = -\frac{(\alpha\beta)(\lambda\sigma)[x,y]^2}{||x||.||y||},
\end{align*}
and this shows that $\alpha\beta$ has the same sign as $\mathrm{cm}(x,y)\cdot\mathrm{cm}(y,x)$. Clearly, this is also true when $x$ and $y$ are dependent, and hence we have the desired. 

\end{proof}

\begin{remark} If $(X,||\cdot||)$ is a Minkowski space whose dimension is greater than $2$, then the $\mathrm{cn}$ function is only well defined if $||\cdot||$ is Euclidean, and in this case it obviously equals the absolute value of the standard cosine. Indeed, it is known that if the norm induced by $||\cdot||$ in any plane of $X$ is Radon, then $||\cdot||$ is derived from an inner product (see \cite{blaschke}).  
\end{remark}

The following immediate corollary can be seen as a local characterization of smooth Radon planes among smooth normed planes, in the sense of the local characterization of the Euclidean plane among all normed planes given by Valentine and Wayment in \cite{valentine1971wilson}. As far as the authors know such a characterization was still missing in the literature. \\

\begin{coro} A smooth Minkowski plane $(X,||\cdot||)$ is Radon if and only if the function $g:X_o\times X_o\rightarrow\mathbb{R}$ given by
\begin{align*}g(x,y)= \mathrm{sgn}\left(\lim_{t\rightarrow 0}\frac{||x+ty||-||x||}{t}\right)\end{align*}
is symmetric.
\end{coro}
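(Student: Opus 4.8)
The plan is to reduce the corollary to the immediately preceding proposition by recognizing the function $g$ of the statement as the sign of $\mathrm{cm}$. First I would combine Proposition \ref{gateauxcm} with the definition of the Gateaux derivative functional to obtain $\lim_{t\to 0}\frac{||x+ty||-||x||}{t} = ||y||\,\mathrm{cm}(x,y)$ for all $x,y\in X_o$. Since $||y|| > 0$, taking signs gives $g(x,y) = \mathrm{sgn}(\mathrm{cm}(x,y))$, so that the symmetry of $g$ is precisely the condition $\mathrm{sgn}(\mathrm{cm}(x,y)) = \mathrm{sgn}(\mathrm{cm}(y,x))$ for all $x,y \in X_o$.

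It then remains to compare this sign condition with the product condition $\mathrm{cm}(x,y)\cdot\mathrm{cm}(y,x)\geq 0$ that characterizes Radon planes in the previous proposition. The easy implication is that symmetry of $g$ implies Radon: if the two cosines always share the same sign, then their product is nonnegative, and the proposition applies directly.

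For the converse I would start from the proposition, which under the Radon hypothesis yields $\mathrm{cm}(x,y)\cdot\mathrm{cm}(y,x)\geq 0$. The step requiring care is upgrading ``nonnegative product'' to ``equal signs'', which can fail in general exactly when one factor vanishes and the other does not. Here the Radon hypothesis saves the argument: by Lemma \ref{lemmacosine} one has $\mathrm{cm}(x,y) = 0$ if and only if $x\dashv_B y$, and since Birkhoff orthogonality is symmetric in a Radon plane, this is equivalent to $\mathrm{cm}(y,x) = 0$. Hence the two cosines vanish simultaneously; when both are nonzero a nonnegative product forces equal nonzero signs, and when both vanish both signs equal $0$. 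In all cases $\mathrm{sgn}(\mathrm{cm}(x,y)) = \mathrm{sgn}(\mathrm{cm}(y,x))$, so $g$ is symmetric.

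The only genuinely substantive point is this zero-case bookkeeping in the forward direction; everything else is a direct translation through Proposition \ref{gateauxcm}. Accordingly, I expect the main obstacle to be notational rather than mathematical, namely keeping the corollary's $g$ clearly distinct from the Gateaux functional denoted by the same letter earlier in the text.
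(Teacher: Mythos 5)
Your proposal is correct and follows exactly the route the paper intends: the paper states this as an ``immediate corollary'' of the preceding proposition (Radon $\Leftrightarrow$ $\mathrm{cm}(x,y)\cdot\mathrm{cm}(y,x)\geq 0$) with no written proof, and your identification $g(x,y)=\mathrm{sgn}(\mathrm{cm}(x,y))$ via Proposition \ref{gateauxcm} is the intended reduction. Your careful handling of the zero case (using Lemma \ref{lemmacosine} together with the symmetry of Birkhoff orthogonality in Radon planes to upgrade ``nonnegative product'' to ``equal signs'') is a genuine detail the paper leaves implicit, and it is exactly right.
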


The next theorem shows that the function $\mathrm{cn}$, in some sense, expresses the Pythagorean trigonometric identity. This can be regarded as an ``almost Euclidean" property of Radon planes. Notice that this is related to the footnote at page 162 of \cite{Bus3}.

\begin{teo} In a Radon plane, for any $x,z \in X_o$ we have
\begin{align*}  \mathrm{cn}(x,z)^2 + \mathrm{cn}(x,b(z))^2= 1.
\end{align*}

\end{teo}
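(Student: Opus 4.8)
The plan is to reduce everything to unit vectors and then to a single $2\times 2$ determinant. Since $b(\lambda u) = b(u)$ for $\lambda > 0$ (Birkhoff orthogonality is positively homogeneous and the sign of $[\lambda u,\cdot]$ is unaffected), the formula $\mathrm{cm}(x,y) = \frac{[y,b(x)]}{||y||}$ shows that $\mathrm{cm}(\lambda x,\mu y) = \mathrm{cm}(x,y)$ for $\lambda,\mu > 0$, so both $\mathrm{cm}$ and hence $\mathrm{cn}$ depend only on directions. Thus I would assume without loss of generality that $x,z \in S$; moreover, since the plane is Radon and $[\cdot,\cdot]$ is rescaled so that $||\cdot||_a = ||\cdot||$, we have $b(w) \in S$ and $[w,b(w)] = 1$ for every $w \in S$. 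With this normalization the formula for $\mathrm{cm}$ collapses, for unit $u,v$, to $\mathrm{cm}(u,v) = [v,b(u)]$. Finally, because the plane is Radon, the earlier proposition gives $\mathrm{cm}(x,z)\mathrm{cm}(z,x) \geq 0$ and $\mathrm{cm}(x,b(z))\mathrm{cm}(b(z),x) \geq 0$, so the square roots cause no sign difficulty and
\begin{align*} \mathrm{cn}(x,z)^2 = \mathrm{cm}(x,z)\mathrm{cm}(z,x), \qquad \mathrm{cn}(x,b(z))^2 = \mathrm{cm}(x,b(z))\mathrm{cm}(b(z),x). \end{align*}

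The crux, and the step I expect to be the main obstacle to phrase cleanly, is identifying $b(b(z))$. Here is where the Radon hypothesis is truly used: since Birkhoff orthogonality is symmetric, $z \dashv_B b(z)$ forces $b(z) \dashv_B z$, so by right-uniqueness $b(b(z)) \in \{z,-z\}$. The sign is then pinned down by the defining condition $[b(z),b(b(z))] > 0$ together with $[b(z),z] = -[z,b(z)] = -1 < 0$, which rules out $z$ and yields $b(b(z)) = -z$. Intuitively this says that, in a Radon plane, $b$ is (up to sign) an involution, and it is exactly this that fails in a general smooth plane.

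With this in hand the four cosines become symplectic pairings: for unit $x,z$ (and $b(x),b(z) \in S$) one has $\mathrm{cm}(x,z) = [z,b(x)]$, $\mathrm{cm}(z,x) = [x,b(z)]$, $\mathrm{cm}(x,b(z)) = [b(z),b(x)]$, and $\mathrm{cm}(b(z),x) = [x,b(b(z))] = [x,-z] = -[x,z]$. Substituting gives
\begin{align*} \mathrm{cn}(x,z)^2 + \mathrm{cn}(x,b(z))^2 = [z,b(x)]\,[x,b(z)] - [b(z),b(x)]\,[x,z]. \end{align*}
To finish I would pass to the basis $\{x,b(x)\}$, in which $[\cdot,\cdot]$ is the standard determinant because $[x,b(x)] = 1$. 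Writing $z = a x + b\,b(x)$ and $b(z) = c x + d\,b(x)$, one reads off $[z,b(x)] = a$, $[x,b(z)] = d$, $[b(z),b(x)] = c$ and $[x,z] = b$, so the right-hand side equals $ad - bc$, which is precisely $[z,b(z)] = 1$ since $z \in S$. This gives the claimed identity; the only genuinely Radon-specific ingredients are $b(z) \in S$ and the involution $b(b(z)) = -z$, while the closing computation is routine linear algebra.
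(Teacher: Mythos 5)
Your proof is correct and follows essentially the same route as the paper's: both hinge on the Radon-specific facts that $b(z)\in S$ and $b(b(z))=-z$ for unit $z$, reduce the four cosines to symplectic pairings, and recognize the resulting expression as $[z,b(z)]=1$ via the decomposition of $z$ in the conjugate basis $\{x,b(x)\}$. The only difference is cosmetic: you normalize everything to unit vectors at the outset and compute $ad-bc$ in coordinates, whereas the paper carries the factors $||b(z)||$ and $||z||_a$ along and cancels them using $||z||_a||b(z)||=1$.
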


\begin{proof} Notice first that, since $\{z,b(z)\}$ is a conjugate base, we have $b^2(z) = -\frac{z}{||z||_a}$. Now, assuming (without loss of generality) that $x,z \in S$, we calculate
\begin{align*} \mathrm{cn}(x,z)^2 + \mathrm{cn}(x,b(z))^2 = [x,b(z)].[z,b(x)]+\frac{[b(z),b(x)]}{||b(z)||}.\frac{[x,-z]}{||z||_a}.
\end{align*}
Since $||z||_a||b(z)|| = |[b(z),z]| = 1$, we have
\begin{align*} \mathrm{cn}(x,z)^2 + \mathrm{cn}(x,b(z))^2 = \left[x[z,b(x)]+b(x)[x,z],b(z)\right],
\end{align*}
and clearly $x[z,b(x)]+b(x)[x,z] = z$. This concludes the proof.

\end{proof}

\begin{remark} What we actually proved is that for any smooth Minkowski space we have, whenever $\{z,b(z)\}$ is a conjugate base, $\mathrm{cm}(x,z).\mathrm{cm}(z,x) + \mathrm{cm}(x,b(z)).\mathrm{cm}(b(z),x) = 1$ for every $x \in X_o$.
\end{remark}

\subsection{Differentiation} \label{differentiation}

Thompson \cite[Section 8.4]{Tho} gave various differentiation formulas for trigonometric functions with respect to suitable parameters. This subsection is devoted to revisit, in some sense, his results, but now using the function $\mathrm{sn}$ (instead of $\mathrm{sm}$) and the map $b$. First, we consider the arc length parameter. The Minkowski arc length of a curve is defined in the usual approximation way using the norm $||\cdot||$. Throughout this section we assume that the unit circle $S$ has $C^2$ regularity, and we let $\gamma_{\partial B}:[0,l]\rightarrow S$ be a positively oriented (with respect to $[\cdot,\cdot]$) arc length parameterization (where $l$ denotes the arc length of the unit circle measured in the norm). 

\begin{prop}\label{propdiff} Assume that the functions $\mathrm{cm}$ and $\mathrm{sn}$ are restricted to $S\times S$. We denote by $\frac{d}{ds_1}$ and $\frac{d}{ds_2}$ the arc length derivatives with respect to the first and second entries, respectively. Then we have

\begin{align*} \frac{d}{ds_1}\mathrm{sn}(x,y) = \mathrm{sn}(b(x),y) \ \mathrm{and} \\ \frac{d}{ds_2}\mathrm{cm}(x,y) = \mathrm{cm}(x,b(y)).
\end{align*}

\end{prop}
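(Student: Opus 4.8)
The plan is to compute each arc-length derivative directly from the formulas established earlier in the excerpt, namely $\mathrm{sn}(x,y) = \frac{[x,y]}{||y||_a||x||}$ and $\mathrm{cm}(x,y) = \frac{[y,b(x)]}{||y||}$, exploiting that both entries range over the unit circle $S$ (so $||x||=||y||=1$) and that the arc-length parameterization $\gamma_{\partial B}$ has a well-understood velocity vector. The crucial geometric input is the identification of the unit tangent vector to $S$: I would first establish that if $\gamma_{\partial B}$ is the positively oriented arc-length parameterization, then $\gamma_{\partial B}'(s) = b(\gamma_{\partial B}(s))$, up to the chosen rescaling of the symplectic form. This is the natural expectation, since Birkhoff orthogonality $x \dashv_B b(x)$ says precisely that $b(x)$ points along the supporting line to $B$ at $x/||x||$, i.e.\ in the tangent direction, and the conditions $||b(x)||_a = 1$ together with arc-length parameterization (unit speed in $||\cdot||$) should pin down the magnitude correctly. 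Verifying this identification carefully is where I expect the main subtlety to lie, because it hinges on the interplay between the norm $||\cdot||$ governing arc length and the antinorm $||\cdot||_a$ normalizing $b$, and one must confirm the rescaling conventions are consistent.

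Granting $\gamma_{\partial B}'(s) = b(\gamma_{\partial B}(s))$, the first formula follows by differentiating $\mathrm{sn}(x,y) = [x,y]$ (since on $S\times S$ the denominators are $1$) with respect to the arc-length parameter $s_1$ of the first entry. By bilinearity of $[\cdot,\cdot]$,
\begin{align*}
\frac{d}{ds_1}\mathrm{sn}(x,y) = \frac{d}{ds_1}[x,y] = \left[\frac{dx}{ds_1},y\right] = [b(x),y] = \mathrm{sn}(b(x),y),
\end{align*}
where the last equality again uses that $b(x),y \in S$ so the denominators are $1$. The only point requiring care is that differentiating the quotient form would a priori produce extra terms from $\frac{d}{ds_1}\big(||y||_a||x||\big)$; these vanish because we have restricted to $S\times S$, but one should note that $||x||\equiv 1$ along the curve so its derivative is zero, and $||y||_a$ does not depend on $x$.

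For the second formula I would differentiate $\mathrm{cm}(x,y) = [y,b(x)]$ with respect to the arc-length parameter $s_2$ of the \emph{second} entry. Here only $y$ varies, so $b(x)$ is constant and
\begin{align*}
\frac{d}{ds_2}\mathrm{cm}(x,y) = \left[\frac{dy}{ds_2},b(x)\right] = [b(y),b(x)] = \mathrm{cm}(x,b(y)),
\end{align*}
the last step using the definition $\mathrm{cm}(x,b(y)) = [b(y),b(x)]/||b(y)|| = [b(y),b(x)]$ since $b(y)$ lies on the unit sphere of $||\cdot||_a$, and—on a Radon plane or under the stated normalization—also has $||b(y)||=1$. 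I would flag that the clean identification $\frac{dy}{ds_2} = b(y)$ is exactly the same tangent-vector fact invoked for the first formula, so both derivatives reduce to a single geometric lemma plus the bilinearity of $[\cdot,\cdot]$; the anticipated obstacle, as noted, is the rigorous justification of that lemma and the bookkeeping of which normalization ($||\cdot||$ versus $||\cdot||_a$, Radon rescaling) makes the constants come out to exactly $1$.
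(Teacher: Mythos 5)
Your overall strategy coincides with the paper's: identify the velocity vector of the arc-length parameterization in terms of the map $b$, then differentiate the bilinear-form expressions for $\mathrm{sn}$ and $\mathrm{cm}$. However, the key lemma you propose, $\gamma_{\partial B}'(s) = b(\gamma_{\partial B}(s))$, is false in a general smooth plane: arc length means unit speed in the \emph{norm}, $||\gamma_{\partial B}'(s)|| = 1$, whereas $b$ is normalized in the \emph{antinorm}, $||b(x)||_a = 1$, so the correct identity --- and the single observation the paper's proof rests on --- is $\gamma_{\partial B}'(s) = b(\gamma_{\partial B}(s))/||b(\gamma_{\partial B}(s))||$. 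Your argument nevertheless lands on the stated formulas only because of a second, compensating error: on $S\times S$ the denominators do not reduce to $1$. Indeed $\mathrm{sn}(x,y) = [x,y]/||y||_a$ with $||y||_a \neq 1$ in general, and $b(x)$ does not lie on $S$, so $\mathrm{sn}(b(x),y) = [b(x),y]/(||y||_a\,||b(x)||)$ rather than $[b(x),y]$; likewise $\mathrm{cm}(x,b(y)) = [b(y),b(x)]/||b(y)||$. The factor $||b(x)||$ (resp.\ $||b(y)||$) that you lose by taking the wrong velocity is exactly the factor you spuriously discard from these denominators, which is why the two mistakes cancel.

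Note also that the proposition is stated for an arbitrary smooth plane with $C^2$ unit circle, not only for Radon planes, so you cannot invoke the Radon rescaling $||\cdot||_a = ||\cdot||$ (which you hint at for the second formula) to force $||b(y)|| = 1$. Once you replace your tangent-vector claim by $\gamma_{\partial B}'(s) = b(\gamma_{\partial B}(s))/||b(\gamma_{\partial B}(s))||$ and keep the denominators $||y||_a$, $||b(x)||$, $||b(y)||$ throughout, your two computations become literally the paper's proof, with no case distinction needed.
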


\begin{proof} We just have to notice that $\frac{d\gamma_{\partial B}}{ds}(x) = \frac{b(x)}{||b(x)||}$. We calculate 
\begin{align*} \frac{d}{ds_1}\mathrm{sn}(x,y) = \frac{d}{ds}\left(\frac{[\gamma_{\partial B}(s),y]}{||y||_a}\right) = \frac{[b(x),y]}{||y||_a||b(x)||} = \mathrm{sn}(b(x),y) \ \mathrm{and}\\
\frac{d}{ds_2}\mathrm{cm}(x,y) = \frac{d}{ds}\left([\gamma_{\partial B}(s),b(x)] \right) = \frac{[b(y),b(x)]}{||b(y)||} = \mathrm{cm}(x,b(y)). 
\end{align*}

\end{proof}

It is also worth to mention that, following \cite{P-B} and \cite[Corollary 8.4.5]{Tho}, we have that the functions $\mathrm{cm}$ and $\mathrm{sn}$ in Radon planes yield solutions to certain second order ordinary differential equations. Indeed, let $\rho:[0,l]\rightarrow\mathbb{R}$ be given as

\begin{align*} \rho(s) = \left|\left|\frac{d}{ds}b\left(\gamma_{\partial B}(s)\right)\right|\right|.
\end{align*}

In a smooth Radon plane the map $s\mapsto b\left(\gamma_{\partial B}(s)\right)$ is a parameterization of the unit circle, but one may expect it fails to be an arc length parameterization if the plane is not Euclidean. (This is indeed true, as will be clear later.) Hence, the function $\rho$ denotes, in some sense, a type of ``distortion" of the unit circle. It describes a kind of simple harmonic motion equation satisfied by the sine and cosine functions of a smooth Radon plane. We have:

\begin{prop} In a smooth Radon plane the functions $s \mapsto \mathrm{sn}\left(\gamma_{\partial B}(s),x_0\right)$ and $s \mapsto \mathrm{cm}\left(x_0,\gamma_{\partial B}(s)\right)$, where $x_0 = \gamma_{\partial B}(0)$, are the solutions to the equation $f'' + \rho f = 0$ with respective initial conditions $f(0) = 0$ and $f'(0) = 1$; and $f(0) = 1$ and $f'(0) = 0$.
\end{prop}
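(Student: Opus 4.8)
The plan is to reduce everything to differentiating the single scalar function $s\mapsto[\gamma_{\partial B}(s),w]$ for a fixed vector $w$, exploiting the Radon normalization throughout. Since the plane is Radon we have $||\cdot||_a=||\cdot||$, so $||b(x)||=||b(x)||_a=1$ and $[x,b(x)]=1$ whenever $x\in S$. Writing out the definitions of $\mathrm{sn}$ and $\mathrm{cm}$ for unit vectors then gives $\mathrm{sn}(\gamma_{\partial B}(s),x_0)=[\gamma_{\partial B}(s),x_0]$ and $\mathrm{cm}(x_0,\gamma_{\partial B}(s))=[\gamma_{\partial B}(s),b(x_0)]$. Thus both functions have the common form $s\mapsto[\gamma_{\partial B}(s),w]$, with $w=x_0$ in the first case and $w=b(x_0)$ in the second, and it suffices to differentiate this expression (which is linear in $\gamma_{\partial B}(s)$) twice and then read off initial data.

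For the first derivative I would invoke the identity $\gamma_{\partial B}'(s)=b(\gamma_{\partial B}(s))$ from the proof of Proposition \ref{propdiff} (the denominator $||b(\gamma_{\partial B}(s))||$ equals $1$ by the Radon normalization). Bilinearity of $[\cdot,\cdot]$ then yields $\frac{d}{ds}[\gamma_{\partial B}(s),w]=[b(\gamma_{\partial B}(s)),w]$, in agreement with the formulas already established in Proposition \ref{propdiff}.

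The crux is the second derivative, for which I must compute $\frac{d}{ds}b(\gamma_{\partial B}(s))$. The key geometric observation is that this velocity is a scalar multiple of $\gamma_{\partial B}(s)$ itself. Indeed, $b(\gamma_{\partial B}(s))$ lies on $S$, and by the symmetry of Birkhoff orthogonality in a Radon plane the relation $\gamma_{\partial B}(s)\dashv_B b(\gamma_{\partial B}(s))$ forces $b(\gamma_{\partial B}(s))\dashv_B\gamma_{\partial B}(s)$, so the supporting line to $S$ at $b(\gamma_{\partial B}(s))$ is parallel to $\gamma_{\partial B}(s)$; equivalently $b^2(\gamma_{\partial B}(s))=-\gamma_{\partial B}(s)$. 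Since $s\mapsto b(\gamma_{\partial B}(s))$ parameterizes $S$ with speed exactly $\rho(s)$ by the definition of $\rho$, the only remaining point is the sign, and a positive-orientation argument shows the velocity points along $b^2(\gamma_{\partial B}(s))=-\gamma_{\partial B}(s)$, so that $\frac{d}{ds}b(\gamma_{\partial B}(s))=-\rho(s)\gamma_{\partial B}(s)$. I expect this orientation/sign bookkeeping to be the main obstacle; once it is settled, bilinearity immediately gives $\frac{d^2}{ds^2}[\gamma_{\partial B}(s),w]=-\rho(s)[\gamma_{\partial B}(s),w]$, that is, $f''+\rho f=0$ for both choices of $w$.

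Finally I would verify the initial conditions at $s=0$, where $\gamma_{\partial B}(0)=x_0$. Direct substitution gives the values $[x_0,x_0]=0$ and $[x_0,b(x_0)]=1$, and the first derivatives $[b(x_0),x_0]$ and $[b(x_0),b(x_0)]=0$; since $[b(x_0),x_0]=-[x_0,b(x_0)]=\pm1$ according to the fixed orientation, this reproduces the prescribed data $(f(0),f'(0))=(0,1)$ for the signed sine and $(1,0)$ for the cosine. As $S$ is assumed $C^2$ the coefficient $\rho$ is continuous, so the uniqueness theorem for the linear second-order equation $f''+\rho f=0$ guarantees that each of the two functions is \emph{the} solution with its prescribed initial data, which is exactly the assertion.
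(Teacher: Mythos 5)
Your proof is correct and takes essentially the same route as the paper: both reduce the two functions to $s\mapsto[\gamma_{\partial B}(s),w]$ with $w=x_0$ or $w=b(x_0)$, use $\gamma_{\partial B}'(s)=b(\gamma_{\partial B}(s))$, and then obtain the second derivative from the fact that $\frac{d}{ds}b(\gamma_{\partial B}(s))$ has norm $\rho(s)$ and points along $b^2(\gamma_{\partial B}(s))=-\gamma_{\partial B}(s)$ by symmetry of Birkhoff orthogonality. The one loose end you flag --- that $f'(0)=[b(x_0),x_0]=-[x_0,b(x_0)]$, so the sign of the sine's initial derivative needs the orientation bookkeeping you defer --- is glossed over in the paper as well, which simply calls the initial conditions straightforward.
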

\begin{proof} From Proposition \ref{propdiff} we have 
\begin{align*} \frac{d}{ds}\mathrm{sn}\left(\gamma_{\partial B}(s),x_0\right) = \mathrm{sn}\left(b\left(\gamma_{\partial B}(s)\right),x_0\right).
\end{align*}
Since we are working in a Radon plane, we have that $b\left(\gamma_{\partial B}(s)\right)$ is a parameterization of the unit circle. Thus, its derivative is a vector which points in the direction of $b\left(b\left(\gamma_{\partial B}(s)\right)\right)$, and hence in the direction (and orientation) of $-\gamma_{\partial B}(s)$. Therefore,\\
\begin{align*} \frac{d^2}{ds^2}\mathrm{sn}\left(\gamma_{\partial B}(s),x_0\right) = \frac{d}{ds}\mathrm{sn}\left(b\left(\gamma_{\partial B}(s)\right),x_0\right) = \frac{d}{ds}\left[b\left(\gamma_{\partial B}(s)\right),x_0\right] = -\rho(s).\left[\gamma_{\partial B}(s),x_0\right] = \\ = -\rho(s).\mathrm{sn}\left(\gamma_{\partial B}(s),x_0\right),
\end{align*}
and the argument is analogous to that of the other function.  Also, it is straightforward that the initial conditions are fulfilled by the studied functions.

\end{proof}

\begin{coro} Let $(X,||\cdot||)$ be a smooth Radon plane, and let $\rho:[0,l]\rightarrow\mathbb{R}$ be defined as above. Then $\rho$ is constant if and only if the norm is Euclidean.
\end{coro}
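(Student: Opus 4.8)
The plan is to extract from the preceding proposition a single vector differential equation satisfied by the arc-length parameterization $\gamma_{\partial B}$ itself, and then to observe that when $\rho$ is constant this equation is a harmonic oscillator whose orbits are ellipses.

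First I would assemble the two ingredients already available in a smooth Radon plane. Since $||\cdot||_a = ||\cdot||$, we have $||b(x)|| = ||b(x)||_a = 1$ for every $x \in S$, so the formula $\frac{d\gamma_{\partial B}}{ds} = \frac{b(\gamma_{\partial B})}{||b(\gamma_{\partial B})||}$ from Proposition \ref{propdiff} simplifies to $\gamma_{\partial B}'(s) = b(\gamma_{\partial B}(s))$. On the other hand, the argument in the proof of the previous proposition shows that $\frac{d}{ds}b(\gamma_{\partial B}(s))$ has norm $\rho(s)$ and points in the orientation of $-\gamma_{\partial B}(s)$, that is, $\frac{d}{ds}b(\gamma_{\partial B}(s)) = -\rho(s)\gamma_{\partial B}(s)$. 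Differentiating the first identity and substituting the second yields
\begin{align*} \gamma_{\partial B}''(s) + \rho(s)\,\gamma_{\partial B}(s) = o, \end{align*}
which holds on all of $[0,l]$.

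For the reverse implication I would note that if the norm is Euclidean then $b$ is the quarter-turn rotation, so $b \circ \gamma_{\partial B}$ is again an arc-length parameterization of $S$ and hence $\rho \equiv 1$; equivalently, $\gamma_{\partial B}'' = -\gamma_{\partial B}$ for the Euclidean circle, so comparison with the displayed equation forces $\rho \equiv 1$. For the forward implication, suppose $\rho \equiv c$. Since $b\circ\gamma_{\partial B}$ is a regular closed parameterization of $S$, its derivative never vanishes, so $\rho > 0$ everywhere and thus $c > 0$. The equation becomes the constant-coefficient system $\gamma_{\partial B}'' + c\,\gamma_{\partial B} = o$, whose unique solution with $\gamma_{\partial B}(0) = x_0$ and $\gamma_{\partial B}'(0) = b(x_0)$ is
\begin{align*} \gamma_{\partial B}(s) = \cos(\sqrt{c}\,s)\,x_0 + \tfrac{1}{\sqrt{c}}\sin(\sqrt{c}\,s)\,b(x_0). \end{align*}
Because $[x_0,b(x_0)] > 0$, the vectors $x_0$ and $b(x_0)$ are linearly independent, so the image of $\gamma_{\partial B}$ is an ellipse centered at the origin. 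This image is exactly $S$, so the unit circle is an ellipse and therefore the norm is Euclidean.

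The reverse direction and the verification of the vector ODE are routine; the heart of the matter is the forward direction. The main thing to get right there is the recognition that a constant $\rho$ turns the \emph{a priori} variable-coefficient equation $\gamma_{\partial B}'' + \rho\,\gamma_{\partial B} = o$ into a harmonic oscillator, together with the observation that its orbits are genuine ellipses — which requires the linear independence of the initial position $x_0$ and the initial velocity $b(x_0)$, guaranteed by $[x_0,b(x_0)]\neq 0$. Once $S$ is known to be an ellipse, the conclusion that the norm is derived from an inner product is classical.
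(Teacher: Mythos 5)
Your proof is correct, but it follows a genuinely different route from the paper's. The paper stays with the \emph{scalar} equation $f''+\rho f=0$ from the preceding proposition: uniqueness of solutions with $f(0)=0$, $f'(0)=1$ shows that a constant $\rho$ forces $\mathrm{sn}$ to be (up to reparameterization) the standard Euclidean sine, and the conclusion is then delegated to the sine-function characterization of the Euclidean plane in \cite[Theorem 4.1]{bmt}. You instead package the two identities $\gamma_{\partial B}'=b\circ\gamma_{\partial B}$ and $\frac{d}{ds}b(\gamma_{\partial B}(s))=-\rho(s)\gamma_{\partial B}(s)$ into the \emph{vector} equation $\gamma_{\partial B}''+\rho\,\gamma_{\partial B}=o$, solve it explicitly when $\rho\equiv c$, and read off that $S$ is an ellipse. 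This buys self-containedness: you need nothing beyond the classical fact that an elliptical unit ball comes from an inner product, whereas the paper leans on an external theorem; the price is that you must justify $c>0$, and your regularity argument for that is the one slightly soft spot --- it is cleaner to note that $\rho\geq 0$ by definition and that $c=0$ would make $\gamma_{\partial B}$ affine, contradicting closedness of $S$. The reverse implication is handled essentially identically in both treatments (and is where the ``only if'' content actually lives, since the paper's uniqueness remark alone does not give it).
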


\begin{proof} The main theorem of the proof is that there is a unique solution for the ordinary differential equation $f'' + \rho f = 0$ with initial conditions $f(0) = 0$ and $f'(0) = 1$. If $\rho$ is constant, then the function $\mathrm{sn}$ is, up to re-parameterization, the standard Euclidean sine function. Therefore, our statement is an easy consequence of \cite[Theorem 4.1]{bmt}.

\end{proof}

\begin{remark} In some sense this means that non-Euclidean Radon planes have a ``second order distortion". Hence, these planes behave similarly to Riemannian manifolds, which locally admit an ``Euclidean up to first order" parameterization (see \cite[Chapter 2]{petersen}).
\end{remark}

Following Thompson's book, the other parameterization of the unit circle that we will consider is the one given by areas of circular sectors. But we will proceed a little differently. We simply consider the usual area measure given by the fixed determinant form $[\cdot,\cdot]$. If $y \in S$, then the area of the (oriented) circular sector between the rays $\left.\left[o,\gamma_{\partial B}(0)\right>\right.$ and $\left.[o,y\right>$ is calculated by 

\begin{align*} a\left(y,\gamma_{\partial B}(0)\right) = \frac{1}{2}\int_0^{s_y}[\gamma_{\partial B}(s),\gamma_{\partial B}'(s)]\ ds,
\end{align*}
where $s_y \in [0,l]$ is such that $y = \gamma_{\partial B}(s_y)$. Of course, setting $ a(s_y)=2a\left(y,\gamma_{\partial B}(0)\right)$ we get a monotone bijection from $[0,l]$ to $[0,2A]$, where $A$ is the area of the unit circle. Thus, we can consider the re-parameterization $\gamma_{A}:[0,2A]\rightarrow S$ given by $\gamma_A(a) = \gamma_{\partial B}(s(a))$.

\begin{prop} The area parameterization $\gamma_A$ is a parameterization by arc length in the antinorm.
\end{prop}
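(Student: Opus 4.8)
The plan is to differentiate the area parameter with respect to the norm arc length parameter, identify the resulting speed, and then use the chain rule to read off the speed of $\gamma_A$ measured in the antinorm. The whole argument reduces to assembling two facts already established: the tangent formula $\gamma_{\partial B}'(s) = \frac{b(\gamma_{\partial B}(s))}{||b(\gamma_{\partial B}(s))||}$ from the proof of Proposition \ref{propdiff}, and the normalization $||b(z)||_a = 1$ built into the definition of the map $b$.

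First I would compute $\frac{da}{ds}$. By construction $a(s) = \int_0^s [\gamma_{\partial B}(u),\gamma_{\partial B}'(u)]\,du$, so the Fundamental Theorem of Calculus gives $\frac{da}{ds} = [\gamma_{\partial B}(s),\gamma_{\partial B}'(s)]$. Substituting the tangent formula, and recalling that $[x,b(x)] = ||x||\,\mathrm{cm}(x,x) = ||x|| = 1$ for every $x \in S$, I obtain
\begin{align*} \frac{da}{ds} = \frac{[\gamma_{\partial B}(s),b(\gamma_{\partial B}(s))]}{||b(\gamma_{\partial B}(s))||} = \frac{1}{||b(\gamma_{\partial B}(s))||}. \end{align*}
In particular this derivative is strictly positive, which re-confirms that $s\mapsto a$ is a monotone bijection and legitimizes differentiating its inverse $s(a)$.

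Next, applying the chain rule to $\gamma_A(a) = \gamma_{\partial B}(s(a))$ and using $\frac{ds}{da} = ||b(\gamma_{\partial B}(s))||$ yields
\begin{align*} \frac{d\gamma_A}{da} = \gamma_{\partial B}'(s)\cdot\frac{ds}{da} = \frac{b(\gamma_{\partial B}(s))}{||b(\gamma_{\partial B}(s))||}\cdot ||b(\gamma_{\partial B}(s))|| = b(\gamma_{\partial B}(s)). \end{align*}
Since $||b(z)||_a = 1$ for every $z \in S$ by the very definition of $b$, it follows that $\left|\left|\frac{d\gamma_A}{da}\right|\right|_a = 1$ identically. This is precisely the statement that $\gamma_A$ has unit speed, and hence is an arc length parameterization, with respect to the antinorm.

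There is no genuine obstacle here beyond putting the right ingredients together. The only point requiring a little care is the sign and orientation bookkeeping: one must check that the positive orientation of $\gamma_{\partial B}$ (with respect to $[\cdot,\cdot]$) makes the integrand $[\gamma_{\partial B},\gamma_{\partial B}']$ positive, so that $a(s)$ is genuinely increasing and the reparameterization is admissible. This, however, is immediate from $[x,b(x)]>0$ for $x \in S$, so the computation goes through without complication.
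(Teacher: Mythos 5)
Your proof is correct and follows essentially the same route as the paper: compute $\frac{da}{ds}=\frac{1}{||b(\gamma_{\partial B}(s))||}$ from the tangent formula and $[x,b(x)]=1$ on $S$, then apply the chain rule to get $\frac{d\gamma_A}{da}=b(\gamma_{\partial B}(s))$ and conclude from $||b(\cdot)||_a=1$. The extra remark on monotonicity of $a(s)$ is a harmless (and reasonable) addition.
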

\begin{proof} Indeed, we have
\begin{align*} \frac{da}{ds}(s_y) = \frac{d}{ds}\int_0^{s_y}[\gamma_{\partial B}(s),\gamma_{\partial B}'(s)]\ ds = \left[y,\frac{b(y)}{||b(y)||}\right] = \frac{1}{||b(y)||}
\end{align*}
for any $y \in S$. Thus,
\begin{align*} \left.\frac{d\gamma_{\partial B}}{da}\right|_y = \left.\frac{d\gamma_{\partial B}}{ds}.\frac{ds}{da}\right|_y = \frac{b(y)}{||b(y)||}.||b(y)|| = b(y).
\end{align*}
Since $||b(y)||_a = 1$ for every $y \in S$ the statement follows.

\end{proof}

\begin{remark} This result can be interpreted as a differential version of the so-called ``Kepler Law" for (smooth) Minkowski planes. See \cite[Theorem 7]{martiniantinorms}. In a Radon plane the norm and antinorm arc length parameters and also the area parameter coincide.
\end{remark}

\noindent\textbf{Acknowledgements.} The first named author thanks to CAPES for partial financial support during the preparation of this manuscript. Both authors want to thank to Professors Gunter Weiss and Ralph Teixeira for valuable contributions during the preparation of this manuscript.

\bibliography{bibliography}

\end{document}